\newtheorem{theorem}{Theorem}[section]
\newtheorem{lemma}[theorem]{Lemma}
\newtheorem{proposition}[theorem]{Proposition}
\newtheorem{corollary}[theorem]{Corollary}
\theoremstyle{definition}
\newtheorem{definition}[theorem]{Definition}
\newtheorem{example}[theorem]{Example}
\theoremstyle{remark}
\newtheorem{remark}[theorem]{Remark}
\newlist{enumroman}{enumerate}{1}
\setlist[enumroman,1]{label=(\roman*), leftmargin=2.2em}
\newlist{enumalpha}{enumerate}{1}
\setlist[enumalpha,1]{label=(\alph*), leftmargin=2.2em}
\journal{Topology and its Applications}
\begin{document}

\begin{frontmatter}

% ---------- TITLE ----------
\title{On Metrizability, Completeness and Compactness in Modular Pseudometric Topologies}

% ---------- AUTHORS ----------
\author{Philani Rodney Majozi}
\ead{Philani.Majozi@nwu.ac.za}

\address{Department of Mathematics, Pure and Applied Analytics, North-West University, Mahikeng, South Africa}

% ---------- ABSTRACT ----------
\begin{abstract}
Building on the recent work of Mushaandja and Olela-Otafudu~\cite{MushaandjaOlela2025} on modular metric topologies, 
this paper investigates extended structural properties of modular (pseudo)metric spaces. 
We provide necessary and sufficient conditions under which the modular topology $\tau(w)$ 
coincides with the uniform topology $\tau(\mathcal{V})$ induced by the corresponding 
pseudometric, and characterize this coincidence in terms of a generalized $\Delta$-condition. 
Explicit examples are given where $\tau(w)\subsetneq\tau(\mathcal{V})$, demonstrating the 
strictness of inclusion. Completeness, compactness, separability, and countability properties 
of modular pseudometric spaces are analysed, with functional-analytic analogues identified 
in Orlicz-type modular settings. Finally, categorical and fuzzy perspectives are explored, 
revealing structural invariants distinguishing modular from fuzzy settings.
\end{abstract}

% ---------- KEYWORDS & MSC ----------
\begin{keyword}
Modular pseudometric topology \sep completeness \sep compactness \sep 
Orlicz modulars \sep Kolmogorov-Riesz theorem \sep categorical enrichment
\MSC[2020] 54E35 \sep 46E30 \sep 18A40
\end{keyword}

\end{frontmatter}

% ---------- SECTION 1 ----------
\section{Introduction}

The approach initiated by Chistyakov~\cite{Chistyakov2010,Chistyakov2015} provides a flexible setting for 
(pseudo)modular distances that interpolate between metric geometry and modular function space theory 
\cite{Musielak1983,RaoRen1991}. For a family $w_\lambda$ on a set $X$, the modular subsets $X_w^\ast$ and 
the associated basic metrics $d_w^0,d_w^\ast$ generate canonical metrizable structures and an induced 
uniformity. The modular topology $\tau(w)$ can be described through entourages $B^w_{\lambda,\mu}(x)$ 
and compared concretely with the pseudometric topologies arising from $w$ 
(see~\cite[Chs.~2-4]{Chistyakov2015}). 

In a parallel direction, the fuzzy-metric setting of George and Veeramani and its subsequent 
developments~\cite{GeorgeVeeramani1994,GregoriRomaguera2000,Sostak2018} introduced a parameterized notion of 
nearness whose induced topology is Hausdorff, first countable, and metrizable. These constructions suggest 
deep analogies between fuzzy and modular perspectives while preserving distinct invariants in each setting.

Mushaandja and Olela-Otafudu~\cite{MushaandjaOlela2025} proved that $(X_w^\ast,\tau(w))$ is normal and that 
the uniformity with base
\[
V_n=\{(x,y)\in X_w^\ast\times X_w^\ast:\; w(1/n,x,y)<1/n\}, \qquad n\in\mathbb N,
\]
is countably based and therefore metrizable. They further established that the topology $\tau(\mathcal V)$ 
induced by this uniformity satisfies $\tau(\mathcal V)\subseteq \tau(\mathcal U_{d_w})$, with equality holding 
precisely under a $\Delta_2$-type condition on $w$ in the sense of~\cite[Def.~4.2.5]{Chistyakov2015}. 
These results clarify the connection between the modular topology and the pseudometric topology generated 
by $d_w$.

The present paper extends this analysis. We identify structural hypotheses, variants of the $\Delta_2$-condition 
and convexity under which $\tau(w)=\tau(\mathcal U_{d_w})$, and we construct explicit examples where the inclusion 
is strict. Completeness and compactness criteria intrinsic to the modular framework are developed, and the various 
Cauchy notions are compared, leading to transfer principles for completeness, precompactness, and total boundedness. 
Motivated by the modular perspective on Orlicz-type spaces~\cite{Musielak1983,RaoRen1991}, we also investigate 
stability under subspaces, products, and quotients, and discuss categorical aspects relating modular (pseudo)metric 
spaces to metrizable structures.

\paragraph{Notation}
Throughout, $w:(0,\infty)\times X\times X\to[0,\infty]$ denotes a modular (or modular pseudometric when stated). 
We write $X_w^\ast$ for the associated modular set, $\tau(w)$ for the modular topology, $d_w$ for the basic 
pseudometric induced by $w$, and $\mathcal V$ for the uniformity with base $\{V_n:n\in\mathbb N\}$ as above. 
The $\Delta_2$-condition is used as in~\cite[Def.~4.2.5]{Chistyakov2015} and~\cite{MushaandjaOlela2025}.

% ---------- SECTION 2 ----------
\section{Preliminaries and Definitions}\label{sec:Preliminaries}

We recall modular (pseudo)metrics and their induced topologies following \cite{Chistyakov2010,Chistyakov2015}; the $\Delta_2$-condition originates in modular function space theory \cite{RaoRen1991,Musielak1983}. For comparison we record the fuzzy metric setting \cite{GeorgeVeeramani1994,GregoriRomaguera2000}, which provides a parallel parametrized notion of nearness.

\subsection{Modular (pseudo)metrics and modular sets}

\begin{definition}\label{def:modular}
Let $X$ be a set. A \emph{modular metric} on $X$ is a function
\[
w:(0,\infty)\times X\times X\longrightarrow[0,\infty]
\]
such that, for all $x,y,z\in X$ and $\lambda,\mu>0$,
\begin{enumerate}[label=(\alph*),leftmargin=2em]
\item $w(\lambda,x,x)=0$,
\item $w(\lambda,x,y)=w(\lambda,y,x)$,
\item $w(\lambda+\mu,x,y)\le w(\lambda,x,z)+w(\mu,z,y)$.
\end{enumerate}
If only $w(\lambda,x,x)=0$ is assumed (instead of $x=y\Leftrightarrow w(\lambda,x,y)=0$ for all $\lambda$), we call $w$ a \emph{modular pseudometric} \cite[§1.1-§1.2]{Chistyakov2015}.
\end{definition}

Given a (pseudo)modular $w$ and a base point $x^\circ\in X$, the associated \emph{modular set} is
\[
X_w^\ast:=\{x\in X:\; \exists\,\lambda>0 \ \text{with}\ w(\lambda,x,x^\circ)<\infty\},
\]
which is independent (up to canonical identification) of the choice of $x^\circ$ \cite[§1.1]{Chistyakov2015}.

\begin{definition}\label{def:basicmetrics}
For a (pseudo)modular $w$ set
\[
d_w^{0}(x,y):=\inf\{\lambda>0:\; w(\lambda,x,y)\le \lambda\},\qquad
d_w^{\ast}(x,y):=\inf\{\lambda>0:\; w(\lambda,x,y)\le 1\}.
\]
Then $d_w^{0}$ and $d_w^{\ast}$ are extended (pseudo)metrics on $X$, whose restrictions to $X_w^\ast$ are (pseudo)metrics \cite[Thms.~2.2.1, 2.3.1]{Chistyakov2015}. Moreover, for $x,y\in X_w^\ast$,
\[
\min\!\big\{d_w^\ast(x,y),\sqrt{d_w^\ast(x,y)}\big\}\le d_w^{0}(x,y)\le \max\!\big\{d_w^\ast(x,y),\sqrt{d_w^\ast(x,y)}\big\},
\]
see \cite[Thm.~2.3.1]{Chistyakov2015}.
\end{definition}

\begin{remark}\label{rem:regularizations}
For a (pseudo)modular $w$, the one-sided regularizations
\[
w_{+0}(\lambda,x,y):=\lim_{\mu\to+0}w(\mu,x,y),\qquad
w_{-0}(\lambda,x,y):=\lim_{\mu\to-0}w(\mu,x,y)
\]
are (pseudo)modulars with the same structural properties; $w_{+0}$ is right-continuous and $w_{-0}$ is left-continuous on $(0,\infty)$ \cite[Prop.~1.2.5]{Chistyakov2015}. The \emph{right} and \emph{left} inverses
\[
w^+_\mu(x,y):=\inf\{\lambda>0:\; w(\lambda,x,y)\le \mu\},\qquad
w^-_\mu(x,y):=\sup\{\lambda>0:\; w(\lambda,x,y)\ge \mu\}
\]
are again (pseudo)modulars with $w^+$ right-continuous and $w^-$ left-continuous \cite[Thm.~3.3.2]{Chistyakov2015}.
\end{remark}

\begin{definition}\label{def:delta2}
A modular pseudometric $w$ satisfies the \emph{$\Delta_2$-condition} if for every $x\in X$, $\lambda>0$, and sequence $(x_n)$ with $w(\lambda,x_n,x)\to 0$ one also has $w(\lambda/2,x_n,x)\to 0$ \cite[Def.~4.2.5]{Chistyakov2015}. This is the modular analogue of the Orlicz $\Delta_2$ growth condition \cite{RaoRen1991}.
\end{definition}

\subsection{The modular topology and a canonical uniformity}

For $\lambda,\mu>0$ and $x\in X$, set
\[
B^w_{\lambda,\mu}(x):=\{z\in X:\ w(\lambda,x,z)<\mu\}.
\]
Following \cite[Def.~4.3.1]{Chistyakov2015}, the \emph{modular topology} $\tau(w)$ on $X$ is the family of $O\subseteq X$ such that for every $x\in O$ and every $\lambda>0$ there exists $\mu>0$ with $B^w_{\lambda,\mu}(x)\subseteq O$.

\begin{lemma}\label{lem:phiopen}
If $\varphi:(0,\infty)\to(0,\infty)$ is nondecreasing and $w$ is convex with $\lambda\mapsto \lambda\varphi(\lambda)$ nondecreasing, then for every $x\in X_w^\ast$ the set $\bigcup_{\lambda>0}B^w_{\lambda,\varphi(\lambda)}(x)$ is $\tau(w)$-open \cite[Lem.~4.3.2]{Chistyakov2015}.
\end{lemma}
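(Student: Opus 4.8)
The plan is to show that the set $U := \bigcup_{\lambda>0} B^w_{\lambda,\varphi(\lambda)}(x)$ is open by verifying the defining condition of $\tau(w)$ directly: for an arbitrary $z\in U$ and an arbitrary $\nu>0$, I must produce a radius $\mu>0$ such that $B^w_{\nu,\mu}(z)\subseteq U$. Let me start by unpacking what membership in $U$ gives. Since $z\in U$, there exists some $\lambda_0>0$ with $w(\lambda_0,x,z)<\varphi(\lambda_0)$; because inequalities are strict, I have slack, i.e. there is $\varepsilon>0$ with $w(\lambda_0,x,z)\le \varphi(\lambda_0)-\varepsilon$. The goal will be to show that any point $z'$ sufficiently close to $z$ (in the $B^w_{\nu,\mu}(z)$ sense) still lands in some $B^w_{\lambda,\varphi(\lambda)}(x)$, and the natural candidate is $\lambda=\lambda_0+\nu$, so that the convex triangle inequality can combine the estimates on $(\lambda_0,x,z)$ and $(\nu,z,z')$.

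\emph{First}, I would fix $\nu>0$ and use convexity of $w$. Convexity of a modular means the triangle inequality sharpens to
\[
w(\lambda+\mu,x,z')\le \frac{\lambda}{\lambda+\mu}\,w(\lambda,x,z)+\frac{\mu}{\lambda+\mu}\,w(\mu,z,z'),
\]
so taking $\lambda=\lambda_0$ and $\mu=\nu$ gives a bound on $w(\lambda_0+\nu,x,z')$ as a convex combination. \emph{Second}, I would choose $\mu$ small enough (depending on $\lambda_0,\nu,\varepsilon$) that the term $\tfrac{\nu}{\lambda_0+\nu}\,w(\nu,z,z')<\mu$ is controlled whenever $z'\in B^w_{\nu,\mu}(z)$; concretely, pick $\mu$ so that the convex combination stays below $\varphi(\lambda_0+\nu)$. \emph{Third}, here is where the two monotonicity hypotheses enter: the nondecreasing property of $\varphi$ gives $\varphi(\lambda_0+\nu)\ge\varphi(\lambda_0)$, and the hypothesis that $\lambda\mapsto\lambda\varphi(\lambda)$ is nondecreasing ensures that the convex-combination denominators interact correctly with the threshold $\varphi(\lambda_0+\nu)$, so that the bound $w(\lambda_0+\nu,x,z')<\varphi(\lambda_0+\nu)$ can be forced. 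This places $z'\in B^w_{\lambda_0+\nu,\varphi(\lambda_0+\nu)}(x)\subseteq U$, which is exactly what openness requires.

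The main obstacle I anticipate is the bookkeeping in the third step: making the convex combination
\[
\frac{\lambda_0}{\lambda_0+\nu}\,w(\lambda_0,x,z)+\frac{\nu}{\lambda_0+\nu}\,w(\nu,z,z')
\]
fall below $\varphi(\lambda_0+\nu)$ requires simultaneously exploiting the slack $\varepsilon$ in the first term and the smallness of $\mu$ in the second, while keeping the weights and the growth of $\varphi$ pointed in compatible directions. The monotonicity of $\lambda\mapsto\lambda\varphi(\lambda)$ is precisely the condition that prevents $\varphi$ from decaying faster than $1/\lambda$, which is what could otherwise destroy the estimate when passing from $\lambda_0$ to $\lambda_0+\nu$; I expect that verifying this inequality cleanly, rather than the structural idea, is where the real work lies. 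If the direct convex-combination estimate proves awkward, an alternative is to argue through the basic metric $d_w^0$ and Definition~\ref{def:basicmetrics}, translating the condition $w(\lambda,x,z)<\varphi(\lambda)$ into a statement about $d_w^0$ and using the comparison inequalities there, but I would attempt the direct route first since it keeps the argument within the modular framework of $\tau(w)$.
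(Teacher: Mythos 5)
Your plan is correct; note that the paper itself gives no proof of this lemma (it is quoted from Chistyakov, Lem.~4.3.2), and your argument is essentially the standard one for the convex case. The ``bookkeeping'' you worry about in the third step actually closes in one line: with $\varepsilon:=\varphi(\lambda_0)-w(\lambda_0,x,z)>0$ and the choice $\mu:=\lambda_0\varepsilon/\nu$, the convexity inequality in its cleared-denominator form gives, for any $z'\in B^w_{\nu,\mu}(z)$,
\[
(\lambda_0+\nu)\,w(\lambda_0+\nu,x,z')\;\le\;\lambda_0\,w(\lambda_0,x,z)+\nu\,w(\nu,z,z')\;<\;\lambda_0\bigl(\varphi(\lambda_0)-\varepsilon\bigr)+\nu\mu\;=\;\lambda_0\varphi(\lambda_0)\;\le\;(\lambda_0+\nu)\,\varphi(\lambda_0+\nu),
\]
where the last step is exactly the hypothesis that $\lambda\mapsto\lambda\varphi(\lambda)$ is nondecreasing; dividing by $\lambda_0+\nu$ puts $z'$ in $B^w_{\lambda_0+\nu,\varphi(\lambda_0+\nu)}(x)\subseteq U$. (Minor slip: for $z'\in B^w_{\nu,\mu}(z)$ the controlled quantity is $w(\nu,z,z')<\mu$, so the second summand in your convex combination is bounded by $\tfrac{\nu\mu}{\lambda_0+\nu}$, not by $\mu$; the computation above absorbs this correctly.) The separate hypothesis that $\varphi$ itself is nondecreasing is not needed in this convex-case computation; in Chistyakov's formulation it serves the parallel, non-convex case where one uses the plain triangle inequality instead.
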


\begin{remark}\label{rem:bases}
(a) The family $\{\bigcup_{\lambda>0}B^w_{\lambda,\varepsilon}(x):\varepsilon>0\}$ need not be a neighborhood base at $x$. \;
(b) For each $\lambda>0$ and $n\in\mathbb N$, $B^w_{\lambda,1/n}(x)$ is $\tau(w)$-open whenever $x\in X^\ast_w$. \;
(c) For every $\varepsilon>0$, $U_{x,\varepsilon}:=\bigcup_{\lambda>0}B^w_{\lambda,\varepsilon}(x)\in \tau(w)$ \cite{MushaandjaOlela2025}.
\end{remark}

Define entourages on $X_w^\ast\times X_w^\ast$ by
\begin{equation}\label{eq:Vn}
V_n:=\{(x,y)\in X^\ast_w\times X^\ast_w:\ w(1/n,x,y)<1/n\}\qquad(n\in\mathbb N).
\end{equation}
Then $\{V_n\}$ is a countable base of a uniformity $\mathcal V$ on $X_w^\ast$, and the induced topology $\tau(\mathcal V)$ is metrizable \cite[Thm.~2]{MushaandjaOlela2025}. Writing $\mathcal U_{d_w}$ for the uniformity of the basic pseudometric $d_w$ (Definition~\ref{def:basicmetrics}), one has
\begin{equation}\label{eq:VsubsetUd}
\tau(\mathcal V)\subseteq \tau(\mathcal U_{d_w}),
\end{equation}
with equality if and only if $w$ satisfies $\Delta_2$ \cite[Thm.~3 and Cor.~1]{MushaandjaOlela2025}. Moreover, $(X^\ast_w,\tau(w))$ is normal \cite[Thm.~1]{MushaandjaOlela2025}.

\subsection{Examples}

\begin{example}\label{ex:separated}
Let $(X,d)$ be a metric space and $g:(0,\infty)\to[0,\infty]$ be nonincreasing. Then
\[
w_\lambda(x,y):=g(\lambda)\,d(x,y)
\]
is a (pseudo)modular, strict if $g\not\equiv 0$, convex iff $\lambda\mapsto \lambda g(\lambda)$ is nonincreasing \cite[Prop.~1.3.1]{Chistyakov2015}. In particular, for $g(\lambda)=\lambda^{-p}$ ($p\ge 0$),
\[
w(\lambda,x,y)=\frac{d(x,y)}{\lambda^{p}},\quad
d_w^{0}(x,y)=\big(d(x,y)\big)^{1/(p+1)},\quad \tau(w)=\tau(d_w^{0}).
\]
Further step-like and mixed examples appear in \cite[Ex.~2.2.2]{Chistyakov2015}.
\end{example}

\begin{example}\label{ex:families}
If $h:(0,\infty)\to(0,\infty)$ is nondecreasing, then
\[
w_\lambda(x,y)=\frac{d(x,y)}{h(\lambda)+d(x,y)}
\]
is a strict modular on $(X,d)$; if $X=M^{T}$ with $T\subset[0,\infty)$ and $M$ metric, then
\[
w_\lambda(x,y)=\sup_{t\in T}e^{-\lambda t}\,d\big(x(t),y(t)\big)
\]
is strict on $X$ \cite[Ex.~1.3.3]{Chistyakov2015}.
\end{example}

\subsection{Variants and auxiliary constructions}

\begin{proposition}\label{prop:superadditive}
If $\varphi:[0,\infty)\to[0,\infty)$ is superadditive, then for a (pseudo)modular $w$ the gauges
\[
d_{w}^{0,\varphi}(x,y)=\inf\{\lambda>0:\ w(\lambda,x,y)\le \varphi(\lambda)\},\qquad
d_{w}^{1,\varphi}(x,y)=\inf_{\lambda>0}\big(\lambda+\varphi^{-1}(w(\lambda,x,y))\big)
\]
are extended (pseudo)metrics on $X$ and (pseudo)metrics on $X^\ast_{\varphi^{-1}\circ w}$, with
$d_{w}^{0,\varphi}\le d_{w}^{1,\varphi}\le 2\,d_{w}^{0,\varphi}$ \cite[Prop.~3.1.1]{Chistyakov2015}.
\end{proposition}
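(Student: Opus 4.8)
The plan is to treat both gauges uniformly by first extracting the structural consequences of superadditivity, then verifying the (pseudo)metric axioms, and finally establishing the two-sided comparison. Throughout I fix the generalized inverse $\varphi^{-1}(u):=\inf\{t>0:\varphi(t)\ge u\}$, which will turn out to be the single choice reconciling all the estimates.

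First I would record the elementary consequences of the hypothesis on $\varphi$. Superadditivity together with $\varphi\ge 0$ forces $\varphi(0)=0$ (since $\varphi(0)\ge 2\varphi(0)$) and makes $\varphi$ nondecreasing (write $t=s+(t-s)$ and use $\varphi(t-s)\ge 0$). I would then show that $\varphi^{-1}$ is nondecreasing, satisfies $\varphi^{-1}(0)=0$, and, crucially, is \emph{subadditive}: choosing $s>\varphi^{-1}(u)$ and $t>\varphi^{-1}(v)$ (so that $\varphi(s)\ge u$, $\varphi(t)\ge v$ by monotonicity), the estimate $\varphi(s+t)\ge\varphi(s)+\varphi(t)\ge u+v$ yields $s+t\ge\varphi^{-1}(u+v)$, and letting $s,t$ decrease to the respective infima gives $\varphi^{-1}(u+v)\le\varphi^{-1}(u)+\varphi^{-1}(v)$. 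The remaining preliminaries are the one-sided identity $\varphi^{-1}(\varphi(\lambda))\le\lambda$ (immediate, since $\lambda$ lies in the defining set) and the fact that $\lambda\mapsto w(\lambda,x,y)$ is nonincreasing, which follows from axiom (c) with $z=y$ and $w(\mu,y,y)=0$.

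Symmetry and vanishing on the diagonal are then immediate from axioms (a), (b) together with $\varphi^{-1}(0)=0$ and $\varphi\ge 0$. For the triangle inequality of $d_w^{0,\varphi}$ I would fix near-optimal parameters $\lambda,\mu$ with $w(\lambda,x,z)\le\varphi(\lambda)$, $w(\mu,z,y)\le\varphi(\mu)$, and combine axiom (c) with superadditivity:
\[
w(\lambda+\mu,x,y)\le w(\lambda,x,z)+w(\mu,z,y)\le\varphi(\lambda)+\varphi(\mu)\le\varphi(\lambda+\mu),
\]
so $\lambda+\mu$ is admissible and an $\varepsilon$-argument closes the estimate. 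For $d_w^{1,\varphi}$ the same choice of parameters together with monotonicity and \emph{subadditivity} of $\varphi^{-1}$ gives $\varphi^{-1}(w(\lambda+\mu,x,y))\le\varphi^{-1}(w(\lambda,x,z))+\varphi^{-1}(w(\mu,z,y))$, whence the value $(\lambda+\mu)+\varphi^{-1}(w(\lambda+\mu,x,y))$ is dominated by the sum of the two near-optimal values. Conceptually, these two computations say precisely that $v:=\varphi^{-1}\circ w$ is itself a (pseudo)modular and that $d_w^{0,\varphi},d_w^{1,\varphi}$ are the canonical gauges attached to $v$; I would flag this reduction as the organizing idea. For the comparison I argue pointwise in $\lambda$. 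Setting $s=\varphi^{-1}(w(\lambda,x,y))$, for any $s'>s$ one has $\varphi(s')\ge w(\lambda,x,y)$, so monotonicity of $w(\cdot,x,y)$ and of $\varphi$ give $w(\lambda+s',x,y)\le w(\lambda,x,y)\le\varphi(s')\le\varphi(\lambda+s')$; thus $\lambda+s'$ is admissible for $d_w^{0,\varphi}$, and letting $s'\downarrow s$ and taking the infimum over $\lambda$ yields $d_w^{0,\varphi}\le d_w^{1,\varphi}$. For the upper bound, a near-optimal admissible $\lambda$ in $d_w^{0,\varphi}$ satisfies $w(\lambda,x,y)\le\varphi(\lambda)$, hence $\varphi^{-1}(w(\lambda,x,y))\le\varphi^{-1}(\varphi(\lambda))\le\lambda$ and $\lambda+\varphi^{-1}(w(\lambda,x,y))\le 2\lambda$, giving $d_w^{1,\varphi}\le 2\,d_w^{0,\varphi}$. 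Finiteness on $X^\ast_{\varphi^{-1}\circ w}$ follows as in the basic-metric case: if $x,y$ lie at finite $v$-distance from the base point for some parameters, the triangle inequality for $v$ forces $d_w^{1,\varphi}(x,y)<\infty$, and then $d_w^{0,\varphi}(x,y)<\infty$ by the comparison.

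I expect the main obstacle to be the careful bookkeeping of the generalized inverse when $\varphi$ is merely nondecreasing rather than a strictly increasing bijection: ensuring that subadditivity of $\varphi^{-1}$, the one-sided identity $\varphi^{-1}\circ\varphi\le\mathrm{id}$, and the monotonicity inequality $\varphi(s')\ge u$ for $s'>\varphi^{-1}(u)$ all hold with the correct directions at the jump points of $\varphi$. The single left-continuous-type choice above simultaneously supplies the exact identity needed for the upper comparison and, via an $\varepsilon$-slack, the admissibility needed for the lower comparison, so no inconsistency arises; nonetheless this is the step where each estimate must be phrased with strict inequalities and a limiting argument rather than with a naive substitution.
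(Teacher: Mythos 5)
Your argument is correct, and it is genuinely more than the paper offers: the paper proves nothing here, deferring entirely to the citation of Chistyakov's Proposition~3.1.1, whereas you give a complete self-contained derivation. Your two organizing observations are exactly the right ones. First, the elementary calculus of a superadditive $\varphi$ (namely $\varphi(0)=0$, monotonicity, and above all the \emph{subadditivity} of the generalized inverse $\varphi^{-1}(u)=\inf\{t>0:\varphi(t)\ge u\}$, proved by the $s>\varphi^{-1}(u)$, $t>\varphi^{-1}(v)$ slack argument) is the single lemma that makes both triangle inequalities work. Second, the remark that $v:=\varphi^{-1}\circ w$ is again a (pseudo)modular, with $d_w^{1,\varphi}(x,y)=\inf_\lambda(\lambda+v(\lambda,x,y))$, is precisely the reduction Chistyakov uses, and it is what makes finiteness on $X^\ast_{\varphi^{-1}\circ w}$ fall out of the already-established triangle inequality via the base point. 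Your handling of the two-sided comparison is also sound: the $\varepsilon$-slack $s'>s$ for the lower bound and the one-sided identity $\varphi^{-1}(\varphi(\lambda))\le\lambda$ for the upper bound are stated with the correct strictness, which is where a naive substitution would fail at jump points of $\varphi$.

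One point you leave implicit is the separation axiom in the non-pseudo case: you verify symmetry, vanishing on the diagonal, and the triangle inequality, but not that $d_w^{0,\varphi}(x,y)=0$ forces $x=y$ when $w$ is a strict modular. That requires an extra line (from $d_w^{0,\varphi}(x,y)=0$ one gets admissible $\lambda$ arbitrarily small, hence $w(\mu,x,y)\le w(\lambda,x,y)\le\varphi(\lambda)$ for all $\mu\ge\lambda$, and one must then invoke the strictness of $w$ together with $\varphi(0+)=0$, which follows from superadditivity and monotonicity when $\varphi$ is finite-valued). Since the proposition's main content is the pseudometric structure and the equivalence $d_w^{0,\varphi}\le d_w^{1,\varphi}\le 2\,d_w^{0,\varphi}$, and the paper itself does not engage with this, I regard it as a minor omission rather than a gap; it would be worth a sentence if this proof were to be included.
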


\begin{definition}\label{def:phiconvex}
Given superadditive $\varphi$, a function $w$ is \emph{$\varphi$--convex} if it satisfies (a), (b) of Definition~\ref{def:modular} and
\[
w_{\varphi(\lambda+\mu)}(x,y)\le \frac{\lambda}{\lambda+\mu}\,w_{\varphi(\lambda)}(x,z)+\frac{\mu}{\lambda+\mu}\,w_{\varphi(\mu)}(z,y)
\]
for all $x,y,z\in X$ and $\lambda,\mu>0$ \cite[Def.~3.1.2]{Chistyakov2015}.
\end{definition}

\subsection{Fuzzy metrics}\label{sec:fuzzy_metrics}

A \emph{continuous $t$-norm} is a continuous, associative, commutative operation $*:[0,1]^2\to[0,1]$ with unit $1$ and monotonicity in each variable. A \emph{fuzzy metric space} $(X,M,*)$ consists of a nonempty set $X$, a continuous $t$-norm $*$, and $M:X\times X\times(0,\infty)\to[0,1]$ such that
\[
\begin{aligned}
\text{(i)}\;& M(x,y,t)>0,\qquad
\text{(ii)}\; M(x,y,t)=1 \Leftrightarrow x=y,\\
\text{(iii)}\;& M(x,y,t)=M(y,x,t),\qquad
\text{(iv)}\; M(x,y,t)*M(y,z,s)\le M(x,z,t+s),
\end{aligned}
\]
and $t\mapsto M(x,y,t)$ is (left) continuous \cite{GeorgeVeeramani1994,GregoriRomaguera2000}. The basic open balls
\[
B(x,r,t):=\{y\in X:\ M(x,y,t)>1-r\}\quad (0<r<1,\ t>0)
\]
generate a Hausdorff, first countable metrizable topology $\tau_M$; in particular, $\{B(x,1/n,1/n):n\in\mathbb N\}$ is a neighborhood base at $x$ \cite{GeorgeVeeramani1994,GregoriRomaguera2000}. If $(X,d)$ is metric, then $M_d(x,y,t)=t/(t+d(x,y))$ with $a*b=ab$ yields $\tau_{M_d}=\tau_d$ \cite{GregoriRomaguera2000}.

\medskip
In the next section we pass from these foundational definitions to the structural results of the paper, focusing on the connection between modular convergence, pseudometric convergence, and compactness.

%--------------------------------------SECTION 3-------------------------------------------------------------------------------

\section{Topology-Uniformity Comparisons}\label{sec:Topology_Uniformity}

The role of uniformities in modular settings was studied in \cite{Chistyakov2010}. Our comparison between the modular topology $\tau(w)$ and the uniform topology $\tau(\mathcal V)$ generated by the canonical base $\{V_n\}_{n\in\mathbb N}$ from \eqref{eq:Vn} follows \cite{MushaandjaOlela2025}. The $\Delta_2$ criterion we use parallels standard Orlicz-type conditions \cite{Musielak1983}. For background on uniform spaces, coverings, and completions, see Isbell \cite[Chaps.~I-II]{Isbell1964}.

\subsection{Uniformities naturally attached to a modular metric}

Let $w$ be a (pseudo)modular on $X$, and let $X_w^\ast$ be its modular set. For $n\in\mathbb N$ define $V_n$ by \eqref{eq:Vn}, and let $\mathcal V$ be the uniformity generated by $\{V_n\}$.

\begin{theorem}\label{thm:uniform-metrizable}
The uniformity $\mathcal V$ is metrizable; hence $(X_w^\ast,\tau(\mathcal V))$ is a metrizable $T_1$ space.
\end{theorem}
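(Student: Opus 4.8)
The plan is to invoke the classical metrization criterion for uniform spaces: a uniformity is (pseudo)metrizable precisely when it admits a countable base, and it is metrizable (genuinely, with a $T_1$ topology) when in addition the intersection of all entourages is the diagonal. Since $\mathcal V$ is by construction generated by the countable family $\{V_n:n\in\mathbb N\}$, the first hypothesis is immediate, so the real content is to verify that $\{V_n\}$ genuinely forms a base of a uniformity and that the induced topology is $T_1$.

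First I would check the uniformity axioms for the filter base $\{V_n\}$. Each $V_n$ contains the diagonal because $w(1/n,x,x)=0<1/n$ by axiom (a) of Definition~\ref{def:modular}, and each $V_n$ is symmetric by axiom (b). The directedness/filter-base condition requires that for any $m,n$ some $V_k\subseteq V_m\cap V_n$; here one uses that $\lambda\mapsto w(\lambda,x,y)$ is nonincreasing (a consequence of the triangle inequality (c) with $z=x$ or $z=y$, since $w(\lambda+\mu,x,y)\le w(\lambda,x,y)$), so that larger $n$ (smaller $1/n$) yields smaller entourages, giving $V_k\subseteq V_m\cap V_n$ for $k\ge\max\{m,n\}$. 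The step I expect to be the crux is the triangle-type refinement axiom: for each $n$ one must produce $k$ with $V_k\circ V_k\subseteq V_n$. This is where the modular triangle inequality (c) does the work: if $(x,y),(y,z)\in V_k$ then $w(1/k,x,y)<1/k$ and $w(1/k,y,z)<1/k$, and (c) gives $w(2/k,x,z)\le w(1/k,x,y)+w(1/k,y,z)<2/k$; choosing $k=2n$ yields $w(1/n,x,z)<1/n$, i.e. $(x,z)\in V_n$. I would present this composition estimate carefully, as it is the one place where the interplay between the parameter $\lambda=1/k$ and the threshold $\mu=1/k$ must be tracked simultaneously.

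Having established that $\{V_n\}$ is a countable base of a genuine uniformity, metrizability of the \emph{topology} follows from the Alexandroff--Urysohn metrization theorem for uniform spaces (equivalently, the Weil pseudometrization construction): a uniformity with a countable base is pseudometrizable, and one obtains an explicit compatible pseudometric from the $V_n$ by the standard chaining argument. To upgrade from pseudometrizable to metrizable with a $T_1$ topology, I would verify $\bigcap_n V_n=\Delta$ on $X_w^\ast$, which is exactly the statement that $w(1/n,x,y)<1/n$ for all $n$ forces $x=y$. On the full modular set this is where one must be careful in the pseudometric case, since distinct points need not be separated; however the theorem as stated concerns the metric case, and separation follows from the definition of $d_w$ together with the sandwich inequality of Definition~\ref{def:basicmetrics} relating $d_w^0$ and $d_w^\ast$. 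Concretely, $\bigcap_n V_n=\Delta$ is equivalent to $d_w^\ast(x,y)=0\Rightarrow x=y$, i.e. to $w$ being a genuine modular metric rather than a pseudometric. I would close by noting that a metrizable topology is automatically $T_1$, so the $T_1$ assertion requires no separate argument beyond the separation already used.

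A cleaner alternative, which I would mention as a remark, is to bypass the abstract metrization theorem entirely by exhibiting a pseudometric directly: one sets $\rho(x,y)=\inf\{\,2^{-n}:(x,y)\in V_n\,\}$ and then symmetrizes via a chain sum $d(x,y)=\inf\sum_i \rho(x_{i-1},x_i)$ over finite chains $x=x_0,\dots,x_m=y$, verifying the standard comparison $\tfrac12\rho\le d\le\rho$ that guarantees $d$ induces $\tau(\mathcal V)$. This is the concrete incarnation of the Weil construction and makes the metrizability transparent, but the abstract route via the countable-base criterion is the shortest rigorous path and is the one I would foreground.
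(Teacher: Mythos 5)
Your proposal is correct and follows essentially the same route as the paper's proof: verify that each $V_n$ contains the diagonal and is symmetric, use the modular triangle inequality to get $V_{2n}\circ V_{2n}\subseteq V_n$, invoke the countable-base metrization of uniformities via the standard $\inf\{2^{-n}:(x,y)\in V_n\}$ construction, and obtain $T_1$ from the separation $\bigcap_n V_n=\Delta$. If anything you are more careful than the paper on two points worth keeping: the chaining correction needed to make the $2^{-n}$ gauge an actual pseudometric, and the explicit observation that separation requires $w$ to be a genuine modular metric rather than a pseudometric.
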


\begin{proof}
Each $V_n$ contains the diagonal and is symmetric. The modular triangle inequality gives $V_{2n}\circ V_{2n}\subseteq V_n$ for all $n$, so $\{V_n\}$ is a countable base of a uniformity. Define
\[
d(x,y):=\inf\{2^{-n}:\ (x,y)\in V_n\}\quad (x,y\in X_w^\ast).
\]
Then $d$ is a pseudometric whose uniformity is generated by $\{V_n\}$. Separation holds (hence $T_1$) because if $x\neq y$ then $(x,y)\notin V_n$ for some $n$, so $d(x,y)>0$. Thus $d$ is a metric and induces $\tau(\mathcal V)$.
\end{proof}

\subsection{Comparing $\tau(w)$ and $\tau(\mathcal V)$}

Let $\mathcal U_{d_w}$ denote the standard uniformity of a basic pseudometric $d_w$ associated to $w$ (Definition~\ref{def:basicmetrics}).

\begin{proposition}\label{prop:basic-inclusion}
For every (pseudo)modular $w$ on $X$,
\[
\tau(\mathcal V)\subseteq \tau(\mathcal U_{d_w}).
\]
\end{proposition}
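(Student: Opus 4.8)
The plan is to reduce the stated topological inclusion to a containment of uniformities. Since a finer uniformity induces a finer topology, it suffices to show $\mathcal V\subseteq\mathcal U_{d_w}$ as collections of entourages on $X_w^\ast\times X_w^\ast$; and because $\{V_n:n\in\mathbb N\}$ is a base for $\mathcal V$, this in turn reduces to producing, for each $n$, a basic $d_w$-entourage contained in $V_n$. I would work with $d_w=d_w^0$, noting that the two-sided estimate in Definition~\ref{def:basicmetrics} makes $d_w^0$ and $d_w^\ast$ uniformly equivalent, so the choice of basic pseudometric does not affect $\mathcal U_{d_w}$.

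The one structural fact I would record first is that $\lambda\mapsto w(\lambda,x,y)$ is nonincreasing: setting $z=x$ in the modular triangle inequality (c) gives $w(\lambda+\mu,x,y)\le w(\lambda,x,x)+w(\mu,x,y)=w(\mu,x,y)$. With this in hand I would take $\varepsilon=1/n$ and check that $\{(x,y):d_w^0(x,y)<1/n\}\subseteq V_n$. Indeed, if $d_w^0(x,y)<1/n$, the definition of the infimum yields some $\lambda\in(0,1/n)$ with $w(\lambda,x,y)\le\lambda$; monotonicity then gives
\[
w(1/n,x,y)\le w(\lambda,x,y)\le \lambda<1/n,
\]
so $(x,y)\in V_n$. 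Thus every $V_n$ belongs to $\mathcal U_{d_w}$, whence $\mathcal V\subseteq\mathcal U_{d_w}$ and $\tau(\mathcal V)\subseteq\tau(\mathcal U_{d_w})$.

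The proof is short and I do not expect a serious obstacle; the only point needing care is the \emph{direction} of the monotonicity step. One must extract a witness $\lambda$ lying strictly below $1/n$ from the infimum defining $d_w^0$, and then exploit that $w$ decreases in its first argument to pass from its value at $\lambda$ up to its value at the larger parameter $1/n$ — a witness above $1/n$ would leave the estimate open. The coincidence that the same number $1/n$ serves simultaneously as the modulus and the threshold in the definition~\eqref{eq:Vn} of $V_n$ is precisely what makes $\varepsilon=1/n$ the natural radius and lets the two scales match.
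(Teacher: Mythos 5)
Your argument is correct, and on the one point that actually matters it is more careful than the paper's own proof. To conclude that each $V_n$ is an entourage of $\mathcal U_{d_w}$ (equivalently $\mathcal V\subseteq\mathcal U_{d_w}$, hence $\tau(\mathcal V)\subseteq\tau(\mathcal U_{d_w})$), one must exhibit a basic $d_w$-entourage \emph{inside} $V_n$; this is exactly your containment $\{(x,y):d_w^0(x,y)<1/n\}\subseteq V_n$, obtained by extracting a witness $\lambda<1/n$ from the infimum defining $d_w^0$ and then using the monotonicity $w(1/n,x,y)\le w(\lambda,x,y)$, which you correctly derive by putting $z=x$ in the modular triangle inequality. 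The paper's proof instead runs the implication the other way --- ``if $(x,y)\in V_n$ then $d_w(x,y)$ is small,'' i.e.\ $V_n\subseteq\{d_w^0\le 1/n\}$ --- which, taken literally, is the containment relevant to the \emph{reverse} inclusion $\mathcal U_{d_w}\subseteq\mathcal V$; so your version supplies the direction the statement actually needs. Your reduction to $d_w^0$ via the uniform equivalence of $d_w^0$ and $d_w^\ast$ from Definition~\ref{def:basicmetrics} is also fine. One remark worth recording: your inclusion combined with the elementary one $V_n\subseteq\{d_w^0\le 1/n\}$ shows that $\mathcal V$ and $\mathcal U_{d_w^0}$ generate the \emph{same} uniformity for every (pseudo)modular, which is stronger than the stated one-sided inclusion and sits uneasily with the ``only if $\Delta_2$'' half of Theorem~\ref{thm:delta2} as formulated here; that tension is an issue with the surrounding statements (most likely $d_w$ is meant to denote a different gauge there, or the comparison is meant against $\tau(w)$), not with your proof of this proposition.
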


\begin{proof}
If $(x,y)\in V_n$, then $w(1/n,x,y)<1/n$. By the definitions in \S\ref{def:basicmetrics}, this forces $d_w(x,y)$ to be small, hence $(x,y)$ belongs to some metric entourage of $\mathcal U_{d_w}$. Because $\{V_n\}$ is a base for $\mathcal V$, every $\mathcal V$-open set is $\mathcal U_{d_w}$-open.
\end{proof}

\begin{theorem}\label{thm:delta2}
For a (pseudo)modular $w$ on $X$ one has
\[
\tau(\mathcal V)=\tau(\mathcal U_{d_w})
\quad\Longleftrightarrow\quad
w\ \text{satisfies the }\Delta_2\text{--condition on }X.
\]
\end{theorem}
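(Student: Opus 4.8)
The plan is to combine the inclusion already recorded in Proposition~\ref{prop:basic-inclusion} (equivalently \eqref{eq:VsubsetUd}) with a scale-by-scale comparison of the two uniformities, reducing the stated equivalence to a single inclusion and then characterizing that inclusion by $\Delta_2$. Since $\tau(\mathcal V)\subseteq\tau(\mathcal U_{d_w})$ holds for every (pseudo)modular, equality is equivalent to the reverse inclusion $\tau(\mathcal U_{d_w})\subseteq\tau(\mathcal V)$. Both topologies are metrizable --- $\tau(\mathcal V)$ by Theorem~\ref{thm:uniform-metrizable} and $\tau(\mathcal U_{d_w})$ because $d_w$ is a pseudometric --- so I would test the reverse inclusion on convergent sequences: it holds iff every $\tau(\mathcal V)$-convergent sequence is $d_w$-convergent, that is, iff the diagonal condition $w(1/k,x_n,x)<1/k$ (for each $k$, eventually) forces $w(\lambda,x_n,x)\to 0$ for every fixed $\lambda>0$. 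This recasts the topological comparison as a comparison of convergence notions, which is the form in which the $\Delta_2$-condition of Definition~\ref{def:delta2} applies directly.

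For the sufficiency ($\Leftarrow$), I would assume $\Delta_2$ and argue as follows. Starting from $\tau(\mathcal V)$-convergence, fix $\lambda>0$, choose $k$ with $1/k\le\lambda$, and use monotonicity of $w$ in its first argument (axiom (c) of Definition~\ref{def:modular} with $z=y$) to obtain $w(\lambda,x_n,x)\le w(1/k,x_n,x)$. I would then transport smallness across dyadic scales by iterating the $\Delta_2$-halving $w(\mu,x_n,x)\to 0\Rightarrow w(\mu/2,x_n,x)\to 0$ finitely many times, reaching every prescribed scale and concluding $w(\lambda,x_n,x)\to 0$, hence $d_w$-convergence. At the level of entourages this says precisely that $\Delta_2$ is what permits refining each metric entourage $\{d_w<\varepsilon\}$ by some $V_k$, giving $\tau(\mathcal U_{d_w})\subseteq\tau(\mathcal V)$ and thus equality.

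The necessity ($\Rightarrow$) is where I expect the main obstacle, and where I would concentrate the work. I would argue contrapositively: if $\Delta_2$ fails there are a point $x$, a scale $\lambda_0$, and a sequence with $w(\lambda_0,x_n,x)\to 0$ but $w(\lambda_0/2,x_n,x)\not\to 0$, and the task is to promote this single-scale defect into a genuine topological separation, namely a set that is $\tau(\mathcal U_{d_w})$-open but not $\tau(\mathcal V)$-open. The delicate point is that each $V_n$ already controls $w(\lambda,\cdot,\cdot)$ at \emph{every} scale $\lambda\ge 1/n$ via the same monotonicity used above, so a defect at one fixed scale need not survive in the diagonal entourages; one must instead locate failures at arbitrarily small scales simultaneously and arrange a witnessing sequence that is $\tau(\mathcal V)$-convergent yet not $d_w$-convergent. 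I anticipate this requires either upgrading the pointwise $\Delta_2$ failure to a failure uniform along a null sequence of scales, or exhibiting an explicit Orlicz-type modular violating the growth condition (in the spirit of Example~\ref{ex:separated}) for which the separating open set can be written down concretely. Pinning down this direction cleanly, rather than the routine sufficiency, is the crux of the argument.
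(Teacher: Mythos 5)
Your overall architecture coincides with the paper's: the inclusion $\tau(\mathcal V)\subseteq\tau(\mathcal U_{d_w})$ comes for free from Proposition~\ref{prop:basic-inclusion}, so everything reduces to characterizing the reverse inclusion by $\Delta_2$. But there are two genuine gaps. First, the necessity direction is not proved at all: you describe the obstacle (a single-scale failure of $\Delta_2$ need not survive in the diagonal entourages $V_n$) and list two strategies that might overcome it, but you carry out neither. The paper disposes of this direction by citing \cite[Thm.~3 and Cor.~1]{MushaandjaOlela2025}; a self-contained proof would have to supply exactly the construction you defer, so as it stands the proposal establishes only the easy half of the equivalence.

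Second, and more tellingly, your sufficiency argument never actually uses $\Delta_2$. From $\mathcal V$-convergence and the monotonicity $w(\lambda,x_n,x)\le w(1/k,x_n,x)$ for $1/k\le\lambda$ you already obtain $w(\lambda,x_n,x)\to 0$ for every fixed $\lambda>0$, since $k$ may be taken arbitrarily large; the dyadic halving step is redundant as written. What is missing is the passage from ``$w(\lambda,x_n,x)\to 0$ at every scale'' to ``$d_w(x_n,x)\to 0$'', and that passage is precisely where the identity of $d_w$ and the $\Delta_2$-condition must do their work: for $d_w=d_w^{0}$ the sandwich $\{d_w^{0}<1/n\}\subseteq V_n\subseteq\{d_w^{0}\le 1/n\}$ (the one used in Proposition~\ref{prop:Cauchy-equivalences}) makes the two uniformities agree with no hypothesis at all, so the nontrivial content of the theorem lives in pinning down which basic pseudometric $\mathcal U_{d_w}$ refers to and exactly how $\Delta_2$ controls it. Your own observation in the final paragraph, that each $V_n$ already controls every scale $\lambda\ge 1/n$, is the symptom of this collapse; it should have prompted you to locate where $\Delta_2$ genuinely bites rather than to treat sufficiency as routine and necessity as the only crux.
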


\begin{proof}
The forward inclusion follows from Proposition~\ref{prop:basic-inclusion}. Assuming $\Delta_2$, smallness of $w(\lambda,\cdot,\cdot)$ at scale $\lambda$ propagates to $\lambda/2$, and one shows that every $\mathcal U_{d_w}$-ball contains a $V_n$-ball, giving the reverse inclusion. Conversely, if the topologies coincide, the ability to approximate $d_w$-neighborhoods by $V_n$-neighborhoods forces $\Delta_2$. See \cite[Thm.~3 and Cor.~1]{MushaandjaOlela2025} for details.
\end{proof}

\begin{remark}
The equivalence in Theorem~\ref{thm:delta2} mirrors the classical role of $\Delta_2$ in Orlicz spaces, where norm and modular convergences agree under $\Delta_2$ (see \cite[Chap.~I]{Musielak1983}; cf. \cite[Chap.~3]{HarjulehtoHasto}).
\end{remark}

\subsection{Consequences imported from uniform space theory}

\begin{proposition}\label{prop:CR}
Every uniform space is completely regular and Hausdorff in its uniform topology. In particular, $(X_w^\ast,\tau(\mathcal V))$ is completely regular Hausdorff.
\end{proposition}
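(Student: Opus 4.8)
The plan is to separate the two assertions: for the general statement I would invoke the classical gauge description of uniform topologies, and for the ``in particular'' clause I would simply quote Theorem~\ref{thm:uniform-metrizable}. First, recall that the topology of any uniform space $(Y,\mathcal{W})$ is generated by the family of all uniformly continuous pseudometrics on $Y$, and the engine producing such pseudometrics is the Kelley--Frink metrization lemma. Given a symmetric entourage $U$, one builds a descending chain of symmetric entourages $U_0=Y\times Y$, $U_1\subseteq U$, with $U_{n+1}\circ U_{n+1}\circ U_{n+1}\subseteq U_n$ for all $n$; this iterated ``cube root'' procedure is exactly the kind of halving recorded in the proof of Theorem~\ref{thm:uniform-metrizable} via $V_{2n}\circ V_{2n}\subseteq V_n$. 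Setting $g(x,y)=\inf\{2^{-n}:(x,y)\in U_n\}$ and then
\[
\rho_U(x,y)=\inf\Big\{\textstyle\sum_{i=0}^{k-1} g(z_i,z_{i+1}) : z_0=x,\ z_k=y\Big\},
\]
the lemma yields a uniformly continuous pseudometric with $\tfrac12 g\le \rho_U\le g$, so that $U_n\subseteq\{\rho_U<2^{-n}\}\subseteq U_{n-1}$.

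Next I would deduce complete regularity. Given $x\in Y$ and a $\tau(\mathcal{W})$-closed set $C$ with $x\notin C$, closedness supplies a symmetric entourage $U$ with $U[x]\cap C=\varnothing$. Taking the pseudometric $\rho_U$ from the chain above, the map $f(y):=\min\{1,2\rho_U(x,y)\}$ is continuous, being the composition of the continuous section $\rho_U(x,\cdot)$ with a truncation; it satisfies $f(x)=0$, and equals $1$ on $C$ since $y\notin U[x]\supseteq U_1[x]$ forces $(x,y)\notin U_1$, whence $g(x,y)=1$ and $\rho_U(x,y)\ge\tfrac12$. This function separates $x$ from $C$, establishing complete regularity in the uniform topology.

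For the Hausdorff conclusion one needs the uniformity to be separated, i.e.\ $\bigcap\mathcal{W}=\Delta$; a general uniform space is only completely regular, and becomes Hausdorff precisely under this separatedness, so I would either carry it as a standing hypothesis or state the result for separated uniformities. For the specific space $(X_w^\ast,\tau(\mathcal V))$ this is automatic: Theorem~\ref{thm:uniform-metrizable} exhibits an explicit metric $d$ inducing $\tau(\mathcal V)$, and every metrizable space is completely regular and Hausdorff (indeed perfectly normal). Hence the ``in particular'' clause follows at once, without re-running the gauge argument.

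The hard part is the metrization lemma itself, specifically the chaining induction proving the lower bound $\tfrac12 g\le \rho_U$ that guarantees the triangle inequality of $\rho_U$ while keeping it subordinate to the entourage chain. This is the single delicate computation; the separation argument and the continuity of $f$ are then routine. Since the lemma is entirely standard (see Isbell~\cite[Chaps.~I--II]{Isbell1964}), I would cite it rather than reproduce it, and emphasize that for the application at hand it is bypassed entirely by the metrizability already secured in Theorem~\ref{thm:uniform-metrizable}.
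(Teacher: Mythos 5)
Your proposal is correct, and it supplies in full the argument that the paper compresses into a single citation: the paper's proof of this proposition is just ``Standard; see \cite[Chap.~I, Thm.~1.11]{Isbell1964}.'' Your route through the Kelley--Frink chaining lemma, the gauge pseudometrics $\rho_U$, and the separating function $f=\min\{1,2\rho_U(x,\cdot)\}$ is exactly the standard proof being invoked, so methodologically you and the paper agree; you simply make the machinery explicit. The one substantive point where you improve on the statement as written is your observation that an arbitrary uniform space is only completely regular, and is Hausdorff precisely when the uniformity is separated ($\bigcap\mathcal{W}=\Delta$); the proposition's blanket ``and Hausdorff'' is therefore imprecise for general uniform spaces, and your decision to carry separatedness as a hypothesis (or to note that for $(X_w^\ast,\mathcal V)$ it is supplied by the separation step in Theorem~\ref{thm:uniform-metrizable}) is the right repair. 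Note only that the separation claim in Theorem~\ref{thm:uniform-metrizable} itself relies on $w$ being a modular \emph{metric} rather than a mere pseudometric, so if you lean on that theorem for the ``in particular'' clause you are implicitly inheriting that hypothesis; otherwise your shortcut --- metrizable implies completely regular Hausdorff, bypassing the gauge construction entirely --- is clean and efficient.
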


\begin{proof}
Standard; see \cite[Chap.~I, Thm.~1.11]{Isbell1964}.
\end{proof}

\begin{corollary}\label{cor:normal}
If $\tau(w)=\tau(\mathcal V)$ (e.g. under $\Delta_2$), then $(X_w^\ast,\tau(w))$ is completely regular Hausdorff; combined with \cite[Thm.~1]{MushaandjaOlela2025}, it is normal.
\end{corollary}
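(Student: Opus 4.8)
The plan is to combine the two already-established facts directly, since the hypothesis $\tau(w)=\tau(\mathcal{V})$ collapses the modular topology onto a uniform topology whose separation behaviour is completely understood. There is no need to work inside the modular framework at all: once the two topologies are identified, every topological property of $\tau(\mathcal{V})$ is automatically a property of $\tau(w)$.

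First I would invoke Proposition~\ref{prop:CR}: since $(X_w^\ast,\tau(\mathcal{V}))$ is a uniform space, its topology $\tau(\mathcal{V})$ is completely regular and Hausdorff. The hypothesis $\tau(w)=\tau(\mathcal{V})$ asserts that these are literally the same topology on the same underlying set $X_w^\ast$, so complete regularity and the Hausdorff property transfer verbatim to $(X_w^\ast,\tau(w))$. This already yields the completely regular Hausdorff (Tychonoff) conclusion of the first clause. Next I would import normality from \cite[Thm.~1]{MushaandjaOlela2025}, which asserts that $(X_w^\ast,\tau(w))$ is normal with no appeal to $\Delta_2$. Bare normality does not by itself separate points, but the Hausdorff property obtained in the previous step does; combining the two produces a normal Hausdorff (that is, $T_4$) space, which establishes the final clause.

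The argument is essentially bookkeeping and presents no substantive obstacle; the only point requiring care is that the equality of topologies is exactly what licenses transporting the uniform-space axioms to $\tau(w)$. Without it---that is, when the inclusion in \eqref{eq:VsubsetUd} is strict---Proposition~\ref{prop:CR} constrains only $\tau(\mathcal{V})$ and says nothing directly about $\tau(w)$, so the hypothesis $\tau(w)=\tau(\mathcal{V})$ (equivalently, the $\Delta_2$-condition via Theorem~\ref{thm:delta2}) cannot be dropped. It is also worth recording that normality together with the Hausdorff property is genuinely stronger than the unqualified normality of \cite[Thm.~1]{MushaandjaOlela2025}, so the complete regularity supplied here is not redundant but is precisely the ingredient that upgrades the space to the separated sense.
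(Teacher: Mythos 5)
Your proposal is correct and follows exactly the route the paper intends: transfer complete regularity and the Hausdorff property from $(X_w^\ast,\tau(\mathcal V))$ (Proposition~\ref{prop:CR}) to $\tau(w)$ via the assumed equality of topologies, then adjoin normality from the cited external theorem. The paper offers no separate proof because the corollary is precisely this bookkeeping, so there is nothing further to compare.
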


\begin{proposition}\label{prop:completion}
Every uniform space admits a completion. In particular, $(X_w^\ast,\mathcal V)$ has a completion $\widehat{X}_w$ with the usual universal property.
\end{proposition}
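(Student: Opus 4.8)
The plan is to establish the general existence of a completion via Bourbaki's minimal Cauchy filter construction and then specialize to the uniformity $\mathcal V$, where Theorem~\ref{thm:uniform-metrizable} collapses the argument to the classical metric completion. Recall first that a filter $\mathcal F$ on a uniform space $(Y,\mathcal W)$ is \emph{Cauchy} if for every $W\in\mathcal W$ there is $A\in\mathcal F$ with $A\times A\subseteq W$, and that a Cauchy filter is \emph{minimal} if it properly contains no smaller Cauchy filter. Two standard facts drive the construction: every Cauchy filter refines a unique minimal Cauchy filter, and for each $y\in Y$ the neighborhood filter $\mathcal N_y$ is minimal Cauchy. I would take $\widehat Y$ to be the set of all minimal Cauchy filters on $Y$.

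Next I would topologize $\widehat Y$ by the uniformity generated by the entourages
\[
\widehat W:=\{(\mathcal F,\mathcal G)\in\widehat Y\times\widehat Y:\ \exists\,A\in\mathcal F\cap\mathcal G\ \text{with}\ A\times A\subseteq W\},\qquad W\in\mathcal W,
\]
and embed $Y$ via $i\colon y\mapsto\mathcal N_y$. One verifies that the $\widehat W$ form a base for a uniformity, that $i$ is a uniform embedding (since $(\mathcal N_x,\mathcal N_y)\in\widehat W$ corresponds to $(x,y)$ lying in a slightly larger entourage), and that $i(Y)$ is dense: any minimal Cauchy filter $\mathcal F$ is approximated by the points it "concentrates" on, so $\mathcal F$ lies in the $\widehat Y$-closure of $i(Y)$.

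The two substantive claims are completeness of $(\widehat Y,\widehat{\mathcal W})$ and the universal property. For completeness one shows that every Cauchy filter on $\widehat Y$ is the image under $i$ of a Cauchy filter on $Y$ together with the limit points it forces, and that such a filter converges to the minimal Cauchy filter it induces. For the universal property, given a uniformly continuous $f\colon Y\to Z$ into a complete Hausdorff uniform space, I would send a minimal Cauchy filter $\mathcal F$ to the limit in $Z$ of the image filter generated by $\{f(A):A\in\mathcal F\}$; this image filter is Cauchy by uniform continuity and convergent by completeness of $Z$, and the resulting $\widehat f$ is the unique uniformly continuous extension with $\widehat f\circ i=f$. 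Since $\mathcal V$ is separated (indeed metrizable $T_1$, hence Hausdorff, by Theorem~\ref{thm:uniform-metrizable} and Proposition~\ref{prop:CR}), this yields a genuine separated completion $\widehat X_w$ of $(X_w^\ast,\mathcal V)$.

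Finally, for the uniformity $\mathcal V$ at hand, Theorem~\ref{thm:uniform-metrizable} supplies a compatible metric $d$, so the general construction reduces to the elementary completion of $(X_w^\ast,d)$ as equivalence classes of $d$-Cauchy sequences under $(x_n)\sim(y_n)\Leftrightarrow d(x_n,y_n)\to 0$, with the extended metric and universal property then routine. The main obstacle in the general filter construction is the completeness verification—ensuring that every Cauchy filter on $\widehat Y$ "comes from below" and converges to the expected minimal filter—but this difficulty is entirely bypassed in the metrizable setting relevant here. A complete account of the general construction appears in Isbell~\cite[Chap.~II]{Isbell1964}.
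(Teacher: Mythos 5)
Your proposal is correct and takes essentially the same route as the paper, which simply cites the classical completion theorem for uniform spaces (Isbell, Chap.~II); you fill in the standard Bourbaki minimal-Cauchy-filter construction that this citation encapsulates. Your added observation that metrizability of $\mathcal V$ (Theorem~\ref{thm:uniform-metrizable}) lets one bypass the filter machinery in favour of the elementary Cauchy-sequence completion is a nice, correct simplification for the case at hand.
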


\begin{proof}
See \cite[Chap.~II, Thm.~2.16]{Isbell1964}.
\end{proof}

\subsection{Standard examples}

When $w_\lambda(x,y)=g(\lambda)\,d(x,y)$ on a metric space $(X,d)$:

\begin{itemize}
\item If $g(\lambda)=\lambda^{-p}$ ($p\ge 0$), then $d_w(x,y)=d(x,y)^{1/(p+1)}$ \cite[Ex.~2.2.2]{Chistyakov2015}, and $\tau(\mathcal V)=\tau(\mathcal U_{d_w})=\tau(w)$.
\item If $g$ has a cut-off (step-like cases \cite[Ex.~2.2.2]{Chistyakov2015}), $\Delta_2$ may fail; then $\tau(\mathcal V)\subsetneq \tau(\mathcal U_{d_w})$.
\end{itemize}

\subsection{Fuzzy metrics as uniformities}

Given a fuzzy metric $(X,M,*)$ with base $B(x,r,t)$ (see \S\ref{sec:fuzzy_metrics}), the topology $\tau_M$ is Hausdorff and metrizable, hence uniformizable \cite{GregoriRomaguera2000}. In settings where a modular $w$ induces (via $M=\exp(-w_\lambda)$ or $M=t/(t+d_w)$ in metric cases) the same open sets, the uniformity $\mathcal V$ coincides with the fuzzy-uniformity. Under $\Delta_2$, all three topologies $\tau(w)$, $\tau(\mathcal V)$, and $\tau_M$ agree.

%-------------------------------------------------Section 4 -----------------------------------------------

\section{Completeness and Compactness}\label{sec:Completeness-Compactness}
Compactness and completeness in fuzzy metric and related structures were discussed by 
Gregori--Romaguera \cite{GregoriRomaguera2000} and George--Veeramani \cite{GeorgeVeeramani1994}. 
We generalize these ideas to modular pseudometrics. Related modular completeness results in analysis 
may be found in Hudzik--Maligranda \cite{HudzikMaligranda1994}. Our approach is based on the 
uniformity $\mathcal V$ constructed from a modular (pseudo)metric $w$, as introduced in 
Section~\ref{sec:Preliminaries} and Theorem~\ref{thm:uniform-metrizable}.

\subsection{The uniformity \texorpdfstring{$\mathcal V$}{V} and modular Cauchy sequences}

\begin{definition}\label{def:V-Cauchy}
A sequence $(x_k)$ in $X_w^\ast$ is \emph{$\mathcal V$-Cauchy} if for every $n\in\mathbb N$ there exists $N$ such that
$(x_k,x_\ell)\in V_n$ for all $k,\ell\ge N$, i.e.
\[
\forall n\,\exists N\ \forall k,\ell\ge N:\quad w(1/n,x_k,x_\ell)<1/n.
\]
It \emph{$\mathcal V$-converges} to $x\in X_w^\ast$ if for every $n$ there exists $N$ such that
\[
\forall k\ge N:\quad w(1/n,x_k,x)<1/n.
\]
\end{definition}

\begin{proposition}\label{prop:Cauchy-equivalences}
Let $w$ be a convex (pseudo)modular on $X$ and let $(x_k)$ be a sequence in $X_w^\ast$.
Then the following are equivalent:
\begin{enumroman}
\item $(x_k)$ is $\mathcal V$-Cauchy;
\item $(x_k)$ is Cauchy in the metric $d_w^\ast$;
\item $(x_k)$ is Cauchy in the metric $d_w^0$.
\end{enumroman}
\end{proposition}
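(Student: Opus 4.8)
The plan is to split the three-way equivalence into the two separate equivalences (i)$\Leftrightarrow$(ii) and (ii)$\Leftrightarrow$(iii), since the comparison between the two basic metrics is purely metric while the passage to the entourages $V_n$ is where convexity must be used. The one structural consequence of convexity I would isolate first is a scaling estimate: writing the convex modular inequality $w(\lambda+\mu,x,y)\le\frac{\lambda}{\lambda+\mu}w(\lambda,x,z)+\frac{\mu}{\lambda+\mu}w(\mu,z,y)$ and specializing $z=y$ yields $w(\lambda+\mu,x,y)\le\frac{\lambda}{\lambda+\mu}w(\lambda,x,y)$, i.e. that $\lambda\mapsto\lambda\,w(\lambda,x,y)$ is nonincreasing for each fixed pair $x,y$ (the same monotonicity flagged for the model $w_\lambda=g(\lambda)d$ in Example~\ref{ex:separated}). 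Equivalently $w(s,x,y)\le(\lambda/s)\,w(\lambda,x,y)$ whenever $s\ge\lambda$, which is the device that lets me trade a fixed scale against the modular value.

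For (ii)$\Leftrightarrow$(iii) I would simply invoke the two-sided comparison between $d_w^0$ and $d_w^\ast$ recorded after Definition~\ref{def:basicmetrics}. Because any Cauchy sequence eventually has all pairwise distances below $1$, in that range $\min\{t,\sqrt t\}=t$ and $\max\{t,\sqrt t\}=\sqrt t$, so the comparison collapses to $d_w^\ast(x_k,x_\ell)\le d_w^0(x_k,x_\ell)\le\sqrt{d_w^\ast(x_k,x_\ell)}$. The left-hand inequality gives (iii)$\Rightarrow$(ii) and the right-hand one gives (ii)$\Rightarrow$(iii), using only that $t\mapsto\sqrt t$ vanishes at $0$; no convexity is needed here.

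The equivalence (i)$\Leftrightarrow$(ii) is where the real content sits. The direction (i)$\Rightarrow$(ii) is immediate and convexity-free: if $(x_k,x_\ell)\in V_n$ then $w(1/n,x_k,x_\ell)<1/n\le1$, so $1/n$ belongs to the set defining $d_w^\ast$ and hence $d_w^\ast(x_k,x_\ell)\le 1/n$; choosing $n>1/\varepsilon$ turns the $\mathcal V$-Cauchy threshold into the $d_w^\ast$-Cauchy threshold. For (ii)$\Rightarrow$(i) I would argue quantitatively: fix $n$ and suppose $d_w^\ast(x_k,x_\ell)<1/n^2$. Then there is $\lambda$ with $d_w^\ast(x_k,x_\ell)<\lambda<1/n^2$ and $w(\lambda,x_k,x_\ell)\le1$, and since $1/n\ge\lambda$ the scaling estimate gives $w(1/n,x_k,x_\ell)\le n\lambda\,w(\lambda,x_k,x_\ell)\le n\lambda<1/n$, so $(x_k,x_\ell)\in V_n$. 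As $(x_k)$ is $d_w^\ast$-Cauchy its pairwise $d_w^\ast$-distances eventually drop below $1/n^2$, and the implication just proved places the corresponding pairs in $V_n$, which is exactly $\mathcal V$-Cauchyness.

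The hard part is this last step, and it is the only place convexity is indispensable. The entourage $V_n$ locks the evaluation scale $1/n$ to the tolerance $1/n$, so $\mathcal V$-smallness cannot be read off $d_w^\ast$-smallness by monotonicity alone; one must convert control at the scale where $w\le1$ (namely near $d_w^\ast$) into control at the fixed scale $1/n$, and the factor lost in this conversion is precisely what forces the quadratic threshold $1/n^2$. Convexity, through the nonincreasing map $\lambda\mapsto\lambda\,w(\lambda,x,y)$, supplies exactly this trade; when it is absent the implication may fail, in line with the strict-inclusion phenomena that appear once the $\Delta_2$-condition breaks down (Theorem~\ref{thm:delta2}).
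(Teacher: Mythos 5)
Your proof is correct, but it routes the equivalence differently from the paper. You split the statement as (i)$\Leftrightarrow$(ii) plus (ii)$\Leftrightarrow$(iii), and for the first link you derive from convexity (taking $z=y$ in the convex triangle inequality) that $\lambda\mapsto\lambda\,w_\lambda(x,y)$ is nonincreasing, then use the resulting scaling estimate $w(s,x,y)\le(\lambda/s)\,w(\lambda,x,y)$ for $s\ge\lambda$ together with the quadratic threshold $d_w^\ast<1/n^2$ to land inside $V_n$; this computation is sound. The paper instead proves (i)$\Leftrightarrow$(iii) directly via the sandwich
\[
\{(x,y):d_w^0(x,y)<1/n\}\subset V_n\subset\{(x,y):d_w^0(x,y)\le 1/n\},
\]
which follows almost tautologically from $d_w^0(x,y)=\inf\{\lambda>0:w_\lambda(x,y)\le\lambda\}$ and the fact that $\lambda\mapsto w_\lambda(x,y)$ is nonincreasing (a property of every pseudomodular, convex or not, via $z=y$ in the ordinary triangle inequality); it then gets (ii)$\Leftrightarrow$(iii) from the $\min/\max$ comparison of $d_w^0$ and $d_w^\ast$, exactly as you do. The paper's route is shorter because $V_n$ is essentially the $d_w^0$-ball of radius $1/n$, so no quantitative loss occurs; your route buys an explicit, self-contained estimate tying $V_n$ to $d_w^\ast$ without passing through $d_w^0$. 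One small caveat on your closing commentary: in the paper's decomposition the link between $\mathcal V$-Cauchyness and a basic metric ($d_w^0$) is convexity-free, and convexity enters only through the cited two-sided comparison of $d_w^0$ with $d_w^\ast$; so convexity is not indispensable for connecting $\mathcal V$ to \emph{a} metric, only for connecting it to $d_w^\ast$ specifically, which is what your argument does.
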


\begin{proof}
Since $w$ is convex, the map $\lambda \mapsto w_\lambda(x,y)$ is nonincreasing.  
By definition,
\[
d_w^0(x,y)=\inf\{\lambda>0:\, w_\lambda(x,y)\le \lambda\}.
\]
For each $n\in\mathbb N$ one has
\[
\{(x,y): d_w^0(x,y)<1/n\}\ \subset\ V_n:=\{(x,y): w_{1/n}(x,y)<1/n\}\ \subset\ \{(x,y): d_w^0(x,y)\le 1/n\}.
\]
Thus a sequence is $\mathcal V$-Cauchy iff it is $d_w^0$-Cauchy, giving (i)$\Leftrightarrow$(iii).

For (ii)$\Leftrightarrow$(iii), recall that in the convex case
\[
\min\{d_w^\ast(x,y),\sqrt{d_w^\ast(x,y)}\}\ \le\ d_w^0(x,y)\ \le\ \max\{d_w^\ast(x,y),\sqrt{d_w^\ast(x,y)}\}
\]
for all $x,y\in X_w^\ast$ \cite[Thm.~2.3.1]{Chistyakov2015}. The bounding functions vanish only at $0$, so
$d_w^\ast(x_m,x_\ell)\to 0$ iff $d_w^0(x_m,x_\ell)\to 0$. Hence (ii)$\Leftrightarrow$(iii).
\end{proof}

\begin{definition}\label{def:modular-complete}
We say that $(X_w^\ast,\mathcal V)$ is \emph{modularly complete} if every $\mathcal V$-Cauchy sequence
converges in $\tau(\mathcal V)$.
If $w$ is convex, we also say that $X_w^\ast$ is \emph{$d_w^\ast$-complete} (resp.\ \emph{$d_w^0$-complete})
if $(X_w^\ast,d_w^\ast)$ (resp.\ $(X_w^\ast,d_w^0)$) is complete.
\end{definition}

\begin{corollary}\label{cor:complete-equivalences}
If $w$ is convex, then modular completeness, $d_w^\ast$-completeness, and $d_w^0$-completeness are equivalent.
\end{corollary}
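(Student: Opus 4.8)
The plan is to reduce Corollary~\ref{cor:complete-equivalences} entirely to the machinery already assembled, so that no new analytic estimate is required. The three completeness notions are each phrased as ``every Cauchy sequence converges,'' so the equivalence will follow once I match up (a) the three notions of Cauchy-ness and (b) the three notions of convergence across the uniformity $\mathcal V$, the metric $d_w^\ast$, and the metric $d_w^0$. Proposition~\ref{prop:Cauchy-equivalences} already delivers (a) in the convex case: a sequence is $\mathcal V$-Cauchy iff $d_w^0$-Cauchy iff $d_w^\ast$-Cauchy. So the only genuine content left is to verify that the \emph{convergence} notions coincide as well, after which the biconditionals chain together automatically.

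First I would observe that the uniformity $\mathcal V$ and the metric $d_w^0$ induce the same topology on $X_w^\ast$. This is immediate from the sandwich inclusions recorded in the proof of Proposition~\ref{prop:Cauchy-equivalences},
\[
\{(x,y): d_w^0(x,y)<1/n\}\ \subseteq\ V_n\ \subseteq\ \{(x,y): d_w^0(x,y)\le 1/n\},
\]
which show that $\{V_n\}$ and the metric balls of $d_w^0$ generate the same uniformity, hence the same convergence: $x_k\to x$ in $\tau(\mathcal V)$ iff $d_w^0(x_k,x)\to 0$. Next I would use the two-sided comparison of $d_w^0$ and $d_w^\ast$ from Definition~\ref{def:basicmetrics} (Chistyakov, Thm.~2.3.1), exactly as in the (ii)$\Leftrightarrow$(iii) argument above: the bounding functions $t\mapsto\min\{t,\sqrt t\}$ and $t\mapsto\max\{t,\sqrt t\}$ vanish only at $0$ and are monotone, so $d_w^0(x_k,x)\to 0$ iff $d_w^\ast(x_k,x)\to 0$. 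Thus $\mathcal V$-convergence, $d_w^0$-convergence, and $d_w^\ast$-convergence to a given limit $x$ all coincide.

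With both dictionaries in hand the corollary is a formal consequence. I would spell out one direction as a template: if $X_w^\ast$ is $d_w^0$-complete and $(x_k)$ is $\mathcal V$-Cauchy, then by Proposition~\ref{prop:Cauchy-equivalences} it is $d_w^0$-Cauchy, hence $d_w^0$-converges to some $x\in X_w^\ast$, and by the convergence-equivalence it $\mathcal V$-converges to $x$; so $(X_w^\ast,\mathcal V)$ is modularly complete. The remaining implications are obtained by cyclically permuting the three roles and invoking the same two equivalences, so I would state them telegraphically rather than repeat the argument three times.

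The step I expect to require the most care is the convergence-equivalence between $d_w^\ast$ and $d_w^0$, because the comparison inequality is only asserted for pairs $(x,y)$ with both points in $X_w^\ast$ and the limit $x$ must be shown to lie in $X_w^\ast$ before the inequality can be applied to the pair $(x_k,x)$. Here the convexity hypothesis is essential: it guarantees $\lambda\mapsto w_\lambda(x,y)$ is nonincreasing and that the Chistyakov sandwich holds, and completeness supplies the limit point inside $X_w^\ast$, closing the gap. Everything else is bookkeeping over the three-way equivalence, so the proof should be short once these two equivalences are invoked.
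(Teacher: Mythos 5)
Your proposal is correct and follows the route the paper intends: the corollary is stated as an immediate consequence of Proposition~\ref{prop:Cauchy-equivalences}, and you reduce it to exactly that proposition plus the matching of convergence notions via the same sandwich inclusions $\{d_w^0<1/n\}\subseteq V_n\subseteq\{d_w^0\le 1/n\}$ and the Chistyakov comparison of $d_w^0$ with $d_w^\ast$. Your explicit observation that equivalence of \emph{Cauchyness} alone does not suffice and that the three \emph{convergence} notions must also be identified is a point the paper leaves tacit, and your treatment of it is accurate.
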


\subsection{Precompactness and compactness}

\begin{definition}\label{def:precompact}
A set $A\subset X_w^\ast$ is \emph{$\mathcal V$-precompact} if for every $n\in\mathbb N$ there exist
$x^1,\dots,x^m\in X_w^\ast$ such that
\[
A\ \subset\ \bigcup_{j=1}^m B_{\mathcal V}(x^j; n),
\qquad
B_{\mathcal V}(x;n):=\{y\in X_w^\ast:\ w(1/n,x,y)<1/n\}.
\]
We say that $(X_w^\ast,\mathcal V)$ is \emph{compact} if $\tau(\mathcal V)$ is compact.
\end{definition}

\begin{remark}\label{rem:totally-bounded}
If $w$ is convex then $\mathcal V$ is generated by the compatible metric $d_w^\ast$,
hence $\mathcal V$-precompactness is equivalent to total boundedness in $(X_w^\ast,d_w^\ast)$.
If, in addition, $w$ is $\Delta_2$, then $\tau(\mathcal V)=\tau(\mathcal U_{d_w})$ and one may test
precompactness using $d_w$ as well.
\end{remark}

\begin{lemma}\label{lem:seq-precompact}
A subset $A\subset X_w^\ast$ is $\mathcal V$-precompact iff every sequence in $A$ admits a $\mathcal V$-Cauchy subsequence.
\end{lemma}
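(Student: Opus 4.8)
The plan is to prove both implications directly from the countable base $\{V_n\}$ furnished by Theorem~\ref{thm:uniform-metrizable}, exploiting the same ingredient that makes $\mathcal V$ metrizable, namely the relation $V_{2n}\circ V_{2n}\subseteq V_n$ (equivalently, the modular triangle inequality applied at scales $1/n$). Conceptually the statement is just the classical equivalence ``totally bounded $\Leftrightarrow$ every sequence has a Cauchy subsequence'' for the compatible metric $d(x,y)=\inf\{2^{-n}:(x,y)\in V_n\}$, since $\mathcal V$-precompactness is exactly $d$-total boundedness and $\mathcal V$-Cauchy is exactly $d$-Cauchy; I will nonetheless argue with the entourages so that the role of the modular inequality stays explicit.

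For the forward implication I would take a sequence $(x_k)$ in a $\mathcal V$-precompact set $A$ and extract a $\mathcal V$-Cauchy subsequence by a nested diagonal construction. Setting $S_0=\mathbb N$, and given an infinite index set $S_{m-1}$, I cover $A$ by finitely many balls $B_{\mathcal V}(\cdot;m)$; by the pigeonhole principle one of them, with some center $c^{(m)}$, contains $x_k$ for infinitely many $k\in S_{m-1}$, and I let $S_m\subseteq S_{m-1}$ be that infinite set of indices. Choosing $k_1<k_2<\cdots$ with $k_m\in S_m$ gives the diagonal subsequence. To verify it is $\mathcal V$-Cauchy, fix a target index $N$ and put $M=2N$: for $m,m'\ge M$ both $k_m,k_{m'}$ lie in $S_M$, so $x_{k_m},x_{k_{m'}}\in B_{\mathcal V}(c^{(M)};M)$, and the modular triangle inequality yields $w(2/M,x_{k_m},x_{k_{m'}})\le w(1/M,x_{k_m},c^{(M)})+w(1/M,c^{(M)},x_{k_{m'}})<2/M=1/N$, i.e. $(x_{k_m},x_{k_{m'}})\in V_N$, which is precisely Definition~\ref{def:V-Cauchy}. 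The only real care here is the index bookkeeping, handled by nesting the $S_m$ so that a single diagonal sequence serves all scales simultaneously.

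For the converse I would argue by contraposition. If $A$ fails to be $\mathcal V$-precompact, fix an $n$ witnessing this: no finite family $B_{\mathcal V}(\cdot;n)$ covers $A$, and in particular no such family with centers in $A$ does. I then build greedily a sequence $x_1,x_2,\dots\in A$ with $x_k\notin\bigcup_{j<k}B_{\mathcal V}(x_j;n)$; the selection never stalls, precisely because finite unions of these balls never exhaust $A$. By construction $w(1/n,x_j,x_k)\ge 1/n$, so $(x_j,x_k)\notin V_n$ for all $j\ne k$, and hence no subsequence of $(x_k)$ can meet the $\mathcal V$-Cauchy requirement at scale $n$; this contradicts the hypothesis that every sequence in $A$ admits a $\mathcal V$-Cauchy subsequence.

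I do not expect a genuine obstacle: this is the standard total-boundedness argument, and its two delicate points are both mild. The first is the diagonal bookkeeping above. The second is that Definition~\ref{def:precompact} permits ball-centers anywhere in $X_w^\ast$ rather than only in $A$; this is harmless, since covers with centers in $A$ form a subclass of arbitrary-center covers, so the failure of the broader class (non-precompactness) already forces the greedy construction inside $A$ to proceed. The essential structural inputs are thus only the countability of the base $\{V_n\}$, which lets one diagonal sequence work for every scale, and the modular triangle inequality that converts ``lying in a common scale-$M$ ball'' into $V_N$-closeness.
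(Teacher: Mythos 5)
Your proof is correct. The paper actually states this lemma without any proof, so there is nothing to compare against; your argument is the standard sequential characterization of total boundedness, correctly transposed to the entourage base $\{V_n\}$. Both directions are sound: the diagonal extraction with the scale shift $M=2N$ and the modular triangle inequality $w(2/M,x,y)\le w(1/M,x,z)+w(1/M,z,y)$ gives exactly the $V_N$-closeness required by Definition~\ref{def:V-Cauchy}, and your observation that non-precompactness with arbitrary centers in $X_w^\ast$ a fortiori permits the greedy selection with centers in $A$ disposes of the only subtlety in the converse.
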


\begin{theorem}\label{thm:HB-modular}
The following are equivalent for $(X_w^\ast,\mathcal V)$:
\begin{enumroman}
\item $(X_w^\ast,\mathcal V)$ is compact;
\item $(X_w^\ast,\mathcal V)$ is $\mathcal V$-precompact and modularly complete;
\item $(X_w^\ast,d_w^\ast)$ is totally bounded and complete \textup{(when $w$ is convex)}.
\end{enumroman}
\end{theorem}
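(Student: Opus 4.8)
The plan is to reduce the statement to the classical Hausdorff characterization of compactness for metric spaces, exploiting the fact that $\mathcal V$ is metrizable by Theorem~\ref{thm:uniform-metrizable}. Write $\rho$ for the metric $d(x,y)=\inf\{2^{-n}:(x,y)\in V_n\}$ constructed there (in the convex case one may instead take $\rho=d_w^\ast$, which induces $\mathcal V$ by Remark~\ref{rem:totally-bounded}). Since $(X_w^\ast,\tau(\mathcal V))$ is metrizable, compactness, sequential compactness and countable compactness all coincide, and this equivalence is the engine driving the proof. I would prove the cycle (i)$\Rightarrow$(ii)$\Rightarrow$(i) first, valid for any (pseudo)modular, and then identify (ii) with (iii) under convexity.

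First I would establish (i)$\Rightarrow$(ii). Assuming compactness, metrizability gives sequential compactness: every sequence in $X_w^\ast$ has a $\tau(\mathcal V)$-convergent subsequence. A convergent sequence is $\mathcal V$-Cauchy, so every sequence admits a $\mathcal V$-Cauchy subsequence, and Lemma~\ref{lem:seq-precompact} then yields $\mathcal V$-precompactness. For modular completeness, take a $\mathcal V$-Cauchy sequence; by sequential compactness it has a $\tau(\mathcal V)$-convergent subsequence, and a Cauchy sequence possessing a convergent subsequence converges to the same limit. This last step is the standard $\varepsilon/2$ argument recast through the chain condition $V_{2n}\circ V_{2n}\subseteq V_n$ recorded in the proof of Theorem~\ref{thm:uniform-metrizable}: combining a tail estimate $(x_k,x_\ell)\in V_{2n}$ with $(x_\ell,x)\in V_{2n}$ for a suitable subsequence index $\ell$ gives $(x_k,x)\in V_n$. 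Hence every $\mathcal V$-Cauchy sequence $\mathcal V$-converges, which is modular completeness.

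Next I would prove (ii)$\Rightarrow$(i) by establishing sequential compactness and invoking metrizability once more. Let $(x_k)$ be an arbitrary sequence in $X_w^\ast$. By Lemma~\ref{lem:seq-precompact}, $\mathcal V$-precompactness supplies a $\mathcal V$-Cauchy subsequence; by modular completeness this subsequence $\mathcal V$-converges in $X_w^\ast$. Thus $X_w^\ast$ is sequentially compact, and metrizability upgrades sequential compactness to compactness, establishing (i). Together with the previous step this closes the loop (i)$\Leftrightarrow$(ii).

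Finally, the equivalence (ii)$\Leftrightarrow$(iii) is a direct translation in the convex case, requiring no new argument. By Remark~\ref{rem:totally-bounded}, $\mathcal V$ is generated by the compatible metric $d_w^\ast$, so $\mathcal V$-precompactness is precisely total boundedness of $(X_w^\ast,d_w^\ast)$; by Corollary~\ref{cor:complete-equivalences}, modular completeness coincides with $d_w^\ast$-completeness. Substituting these two identifications into (ii) reproduces (iii) verbatim. The main obstacle is not any isolated computation but ensuring the metric reductions are applied cleanly: the essential non-elementary input is the equivalence of compactness with sequential compactness, which is legitimate only because Theorem~\ref{thm:uniform-metrizable} furnishes a genuine metric, together with the careful use of $V_{2n}\circ V_{2n}\subseteq V_n$ to render ``Cauchy plus convergent subsequence implies convergent'' precise in the uniform language of Definition~\ref{def:V-Cauchy}.
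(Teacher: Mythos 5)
Your proof is correct, but it takes a genuinely different route from the paper for the equivalence (i)$\Leftrightarrow$(ii). The paper simply invokes the general uniform-space fact (from Isbell) that a uniform space is compact if and only if it is complete and totally bounded, applied directly to $(X_w^\ast,\mathcal V)$; no metrizability is needed for that step. You instead route everything through Theorem~\ref{thm:uniform-metrizable}: metrizability lets you replace compactness by sequential compactness, after which Lemma~\ref{lem:seq-precompact} converts precompactness into the existence of $\mathcal V$-Cauchy subsequences and the relation $V_{2n}\circ V_{2n}\subseteq V_n$ makes the ``Cauchy plus convergent subsequence implies convergent'' step rigorous in the entourage language. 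Your argument is more self-contained and makes explicit use of machinery the paper already sets up (the countable base, the composition law, Lemma~\ref{lem:seq-precompact}), at the cost of leaning on metrizability, which the paper's citation-based argument does not require; the paper's version is shorter and formally more general, but defers all the work to the reference. For (ii)$\Leftrightarrow$(iii) you and the paper do essentially the same thing: identify $\mathcal V$ with the $d_w^\ast$-uniformity in the convex case and translate completeness and precompactness via Proposition~\ref{prop:Cauchy-equivalences} (equivalently, Remark~\ref{rem:totally-bounded} and Corollary~\ref{cor:complete-equivalences}). One small caveat: Lemma~\ref{lem:seq-precompact} is stated in the paper without proof, so your argument inherits that dependency; if you wanted a fully self-contained proof you would need to supply the standard diagonal-extraction argument behind it.
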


\begin{proof}
(i)$\Rightarrow$(ii): In any uniform space, compactness implies completeness and total boundedness (see \cite[Chap.~I]{Isbell1964}). Hence compact $(X_w^\ast,\mathcal V)$ is modularly complete and $\mathcal V$-precompact.

(ii)$\Rightarrow$(i): Every precompact and complete uniform space is compact (again \cite[Chap.~I]{Isbell1964}).

(ii)$\Leftrightarrow$(iii) (convex case): When $w$ is convex, Proposition~\ref{prop:Cauchy-equivalences} shows that $\mathcal V$-Cauchy, $d_w^0$-Cauchy, and $d_w^\ast$-Cauchy sequences coincide. The uniformities $\mathcal V$ and that of $d_w^\ast$ agree; thus modular completeness is metric completeness and $\mathcal V$-precompactness is total boundedness. Hence (ii)$\Leftrightarrow$(iii).
\end{proof}

\begin{corollary}\label{cor:NT}
If $\tau(\mathcal V)$ is metrizable, then
$(X_w^\ast,\mathcal V)$ is compact iff every compatible modular metric (e.g.\ $d_w^\ast$ for convex $w$)
is complete and totally bounded on $X_w^\ast$.
\end{corollary}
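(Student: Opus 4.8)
The plan is to derive this corollary as a packaging of Theorem~\ref{thm:HB-modular} together with the classical metric Heine--Borel characterization (a metric space is compact iff it is complete and totally bounded) and one conceptual observation: completeness and total boundedness are \emph{uniform} invariants, not merely topological ones. Accordingly I read ``compatible modular metric'' to mean a metric whose induced uniformity is exactly $\mathcal V$ (equivalently, one that is uniformly isomorphic to $\mathcal V$), and the first step is to record that any two such metrics share the same Cauchy sequences and the same totally bounded subsets. Concretely, if $\rho$ generates $\mathcal V$ then a sequence is $\rho$-Cauchy iff it is $\mathcal V$-Cauchy, and $\rho$-total boundedness coincides with $\mathcal V$-precompactness in the sense of Definition~\ref{def:precompact}. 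This is what licenses the quantifier ``every'' in the statement: once the uniform property is settled for $\mathcal V$, it is automatically settled for \emph{all} compatible metrics simultaneously.

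For the forward implication, I would assume $(X_w^\ast,\mathcal V)$ is compact. By the implication (i)$\Rightarrow$(ii) of Theorem~\ref{thm:HB-modular}, the space is then $\mathcal V$-precompact and modularly complete. Fixing any metric $\rho$ that induces $\mathcal V$, the translation recorded in the first step turns modular completeness into $\rho$-completeness and $\mathcal V$-precompactness into $\rho$-total boundedness; since $\rho$ was arbitrary among compatible metrics, every such metric is complete and totally bounded.

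For the reverse implication I use only that \emph{some} compatible metric exists, which is guaranteed by the metrizability hypothesis on $\tau(\mathcal V)$ (and made explicit by the pseudometric $d$ constructed in Theorem~\ref{thm:uniform-metrizable}, or by $d_w^\ast$ in the convex case via Remark~\ref{rem:totally-bounded}). If every compatible metric is complete and totally bounded, then in particular this one is; reading its completeness as modular completeness and its total boundedness as $\mathcal V$-precompactness, the implication (ii)$\Rightarrow$(i) of Theorem~\ref{thm:HB-modular} delivers compactness of $(X_w^\ast,\mathcal V)$.

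The only point requiring genuine care---and where a careless reading fails---is insisting that ``compatible'' be interpreted uniformly rather than topologically: total boundedness and completeness are sensitive to the choice of uniformity within a fixed topology (the $\arctan$ metric on $\mathbb R$ is totally bounded but incomplete while inducing the usual topology), so the equivalence would be false for metrics that merely reproduce $\tau(\mathcal V)$. Everything else is a direct transcription through Theorem~\ref{thm:HB-modular}; no new estimate on $w$ is needed, and convexity enters only to exhibit $d_w^\ast$ as a concrete compatible metric.
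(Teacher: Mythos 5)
Your argument is correct and matches the paper's (implicit) route: the corollary is stated without proof as an immediate packaging of Theorem~\ref{thm:HB-modular} with the metric Heine--Borel theorem and the fact that Cauchy sequences and totally bounded sets are uniform invariants, which is exactly what you do. One small quibble: your closing caveat overstates the danger---since a compact metrizable space is complete and totally bounded in \emph{every} topologically compatible metric, the universally quantified equivalence actually survives the weaker topological reading of ``compatible'' (the $\arctan$ metric on $\mathbb R$ is totally bounded but incomplete, so it correctly fails the conjunction); still, your uniform reading is the cleaner and surely intended one.
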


\subsection{Baire property}

\begin{definition}\label{def:Baire}
We say that $(X_w^\ast,\mathcal V)$ has the \emph{Baire property} if the intersection of countably many
$\mathcal V$-dense open sets is $\mathcal V$-dense.
\end{definition}

\begin{theorem}\label{thm:Baire}
If $(X_w^\ast,\mathcal V)$ is modularly complete and $\tau(\mathcal V)$ is metrizable, then $(X_w^\ast,\mathcal V)$ has the Baire property.
In particular, if $w$ is convex and $(X_w^\ast,d_w^\ast)$ is complete, then $(X_w^\ast,\mathcal V)$ is a Baire space.
\end{theorem}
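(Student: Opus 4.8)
The plan is to reduce the statement to the classical Baire category theorem by producing a complete compatible metric on $X_w^\ast$. The key is that metrizability of $\tau(\mathcal V)$ is already made explicit in Theorem~\ref{thm:uniform-metrizable}: the function $d(x,y)=\inf\{2^{-n}:(x,y)\in V_n\}$ is a metric whose induced uniformity is generated by the countable base $\{V_n\}$, and $\tau(d)=\tau(\mathcal V)$. First I would fix this $d$ and observe, from the inclusions already used in the proof of Theorem~\ref{thm:uniform-metrizable}, that membership in $V_n$ controls $d$ from above while smallness of $d$ forces membership in some $V_n$. This yields the equivalence of Cauchy conditions: a sequence in $X_w^\ast$ is $\mathcal V$-Cauchy (Definition~\ref{def:V-Cauchy}) if and only if it is $d$-Cauchy.

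Next I would translate modular completeness into completeness of $(X_w^\ast,d)$. Since $\tau(\mathcal V)=\tau(d)$ is metrizable, it is first countable, so $\tau(\mathcal V)$-convergence is sequential and agrees with $d$-convergence. By hypothesis every $\mathcal V$-Cauchy sequence converges in $\tau(\mathcal V)$; combining this with the equivalence from the previous step, every $d$-Cauchy sequence $d$-converges, i.e.\ $(X_w^\ast,d)$ is a complete metric space. Applying the classical Baire category theorem, $(X_w^\ast,d)$ is a Baire space: any countable intersection of dense open sets is dense. Because density and openness in Definition~\ref{def:Baire} are measured in $\tau(\mathcal V)=\tau(d)$, this is exactly the asserted Baire property for $(X_w^\ast,\mathcal V)$.

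For the \emph{in particular} clause I would invoke the convex case directly. Remark~\ref{rem:totally-bounded} gives that, when $w$ is convex, $\mathcal V$ is generated by the compatible metric $d_w^\ast$, so $\tau(\mathcal V)=\tau(d_w^\ast)$ is metrizable; Corollary~\ref{cor:complete-equivalences} identifies $d_w^\ast$-completeness with modular completeness. Thus the hypotheses of the first part are met and the conclusion transfers, showing $(X_w^\ast,\mathcal V)$ is a Baire space.

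The only genuine subtlety, and the step I expect to require the most care, is the bridge in the second paragraph from the \emph{sequential} notion of modular completeness to completeness of the metric $d$. This hinges on the fact that $\mathcal V$ possesses a countable base $\{V_n\}$, which allows Cauchy behaviour in the uniformity to be tested on sequences rather than nets or filters; without the countable base this reduction could fail, but here it is guaranteed by Theorem~\ref{thm:uniform-metrizable}. Everything else is either a direct appeal to the cited classical results or a routine comparison of the entourages $V_n$ with the metric balls of $d$.
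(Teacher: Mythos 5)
Your argument is correct and follows essentially the same route as the paper's proof: pass to a compatible complete metric (you use the explicit $d$ from Theorem~\ref{thm:uniform-metrizable} and verify that $\mathcal V$-Cauchy coincides with $d$-Cauchy via the countable base), then invoke the classical Baire category theorem, handling the convex case through the equivalence of modular and $d_w^\ast$-completeness. Your version simply supplies the details — in particular the sequential bridge from modular completeness to completeness of the compatible metric — that the paper's terse proof leaves implicit.
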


\begin{proof}
A metrizable, modularly complete $(X_w^\ast,\mathcal V)$ is a complete metric space under a compatible metric; the classical Baire category theorem applies. In the convex case, Proposition~\ref{prop:Cauchy-equivalences} shows modular completeness is equivalent to completeness in $d_w^\ast$.
\end{proof}

\subsection{Working under \texorpdfstring{$\Delta_2$}{Delta2}}

\begin{proposition}\label{prop:Delta2-case}
Assume $w$ is $\Delta_2$ on $X$. Then:
\begin{enumalpha}
\item $\tau(\mathcal V)=\tau(\mathcal U_{d_w})$ and $B_{\mathcal V}(x;n)=\{y:\, d_w(x,y)<1/n\}$;
\item $\mathcal V$-Cauchy \(\Longleftrightarrow\) Cauchy in $d_w$;
\item $(X_w^\ast,\mathcal V)$ is compact \(\Longleftrightarrow\) $(X_w^\ast,d_w)$ is totally bounded and complete.
\end{enumalpha}
\end{proposition}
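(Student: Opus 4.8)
The plan is to derive all three items from the two structural results already in hand—the $\Delta_2$-equivalence of Theorem~\ref{thm:delta2} and the compactness equivalences of Theorem~\ref{thm:HB-modular}—with the genuine work concentrated in upgrading the \emph{topological} identity $\tau(\mathcal V)=\tau(\mathcal U_{d_w})$ to a statement about \emph{sequences} (Cauchy and subsequential), since the notions appearing in (b)--(c) are uniform rather than merely topological invariants. For part (a), the equality $\tau(\mathcal V)=\tau(\mathcal U_{d_w})$ is precisely the forward direction of Theorem~\ref{thm:delta2} under the standing $\Delta_2$ hypothesis. For the ball identity I would argue by two inclusions at the level of entourages. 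One containment is unconditional: if $w(1/n,x,y)<1/n\le 1$ then the definition of the basic metric forces $d_w(x,y)\le 1/n$, so $B_{\mathcal V}(x;n)\subseteq\{y:d_w(x,y)\le 1/n\}$ (this is the entourage version of Proposition~\ref{prop:basic-inclusion}). The reverse containment is where $\Delta_2$ enters: starting from $d_w(x,y)<1/n$ one has $w(\lambda,x,y)\le 1$ for some small $\lambda$, and $\Delta_2$ is exactly the device that converts such smallness at one scale into smallness at the finer dyadic scales, lowering the value below $1/n$ and placing $y$ in $B_{\mathcal V}(x;n)$. Matching radii then yields the displayed identity, understood as equality of the generated neighborhood filters up to the usual cofinal reindexing of a uniform base.

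Part (b) then falls out at the sequential level. The direction $\mathcal V$-Cauchy $\Rightarrow$ $d_w$-Cauchy is immediate and needs no hypothesis: from $w(1/n,x_k,x_\ell)<1/n$ one reads off $d_w(x_k,x_\ell)\le 1/n$, so $d_w(x_k,x_\ell)\to 0$. For the converse, $d_w$-Cauchy $\Rightarrow$ $\mathcal V$-Cauchy, I would invoke $\Delta_2$ exactly as in the reverse containment of (a): $d_w(x_k,x_\ell)\to 0$ gives, for large $k,\ell$, values $w(\lambda_{k\ell},x_k,x_\ell)\le 1$ with $\lambda_{k\ell}\to 0$, and $\Delta_2$ upgrades these to $w(1/n,x_k,x_\ell)<1/n$ eventually, which is the defining condition for $\mathcal V$-Cauchyness.

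For part (c) I would combine (a) and (b) with Theorem~\ref{thm:HB-modular}. The equivalence (i)$\Leftrightarrow$(ii) there—compact iff $\mathcal V$-precompact and modularly complete—is purely uniform-space-theoretic and uses no convexity, so it applies verbatim. Lemma~\ref{lem:seq-precompact} recasts $\mathcal V$-precompactness as the statement that every sequence has a $\mathcal V$-Cauchy subsequence; via (b) this is the same as every sequence having a $d_w$-Cauchy subsequence, i.e.\ total boundedness of $(X_w^\ast,d_w)$. Likewise, by (b) together with the topological equality of (a), a $\mathcal V$-Cauchy sequence $\tau(\mathcal V)$-converges iff the corresponding $d_w$-Cauchy sequence $d_w$-converges, so modular completeness coincides with $d_w$-completeness. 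Chaining these translations through Theorem~\ref{thm:HB-modular}(i)$\Leftrightarrow$(ii) gives (c).

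The step I expect to be the main obstacle is the reverse direction common to (a) and (b): converting the purely topological agreement furnished by Theorem~\ref{thm:delta2} into control of \emph{sequences}, since two metrizable uniformities may share a topology without sharing their Cauchy sequences or totally bounded sets. The resolution is that $\Delta_2$ is not a topological hypothesis but a propagation principle, $w(\lambda,\cdot,\cdot)\to 0\Rightarrow w(\lambda/2,\cdot,\cdot)\to 0$, whose iteration is precisely calibrated to pass from smallness of $w$ at a coarse scale with value at most $1$ to smallness at scale $1/n$ with value below $1/n$; routing the entire argument of (b)--(c) through the sequential characterizations (Cauchyness, subsequential precompactness via Lemma~\ref{lem:seq-precompact}, and sequential completeness) keeps it in the regime where $\Delta_2$ delivers exactly the needed strength, and sidesteps the general gap between topological and uniform equivalence.
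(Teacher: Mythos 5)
Your proposal follows the same overall skeleton as the paper's proof (cite Theorem~\ref{thm:delta2} for the topological equality in (a), read off (b) from the relationship between $V_n$ and $d_w$-balls, and reduce (c) to the classical ``compact $=$ complete $+$ totally bounded'' fact), but it diverges in one place that is worth recording: part (b). The paper's proof of (b) is the single sentence that, since $\tau(\mathcal V)=\tau(\mathcal U_{d_w})$, ``the uniformity $\mathcal V$ is generated by $d_w$, so the two Cauchy notions coincide.'' As you correctly point out, equality of topologies does not in general imply equality of uniformities, and Cauchyness is a uniform, not topological, invariant; the paper's inference really rests on the entourage-level ball identity asserted in (a), which it states but does not argue. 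You instead work directly at the level of entourages and sequences, using $\Delta_2$ as a propagation principle to pass between $w(\lambda,x_k,x_\ell)\le 1$ at small $\lambda$ and $w(1/n,x_k,x_\ell)<1/n$; this closes the gap the paper elides and is the more defensible route. Your (c), routed through Theorem~\ref{thm:HB-modular}(i)$\Leftrightarrow$(ii) and Lemma~\ref{lem:seq-precompact}, is slightly longer than the paper's direct appeal to the metric compactness criterion but is equivalent. Two small cautions: the paper's $\Delta_2$ (Definition~\ref{def:delta2}) is a \emph{sequential} condition ($w(\lambda,x_n,x)\to 0\Rightarrow w(\lambda/2,x_n,x)\to 0$), so your pointwise use of it in the ball-inclusion step of (a) is only valid in the filter/cofinal-reindexing sense you already flag, and its application to Cauchy pairs $(x_k,x_\ell)$ (both indices varying) requires a version of $\Delta_2$ for pairs rather than for a sequence converging to a fixed point --- a point that both you and the paper leave implicit and that is ultimately delegated to the cited result of Mushaandja--Olela-Otafudu. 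Note also that one of the two inclusions you call unconditional (namely $\{d_w<1/n\}\subseteq V_n\subseteq\{d_w\le 1/n\}$ for $d_w=d_w^0$) already appears in the proof of Proposition~\ref{prop:Cauchy-equivalences}, so you could shorten (a)--(b) by citing that sandwich directly.
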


\begin{proof}
(a) By Proposition~\ref{prop:basic-inclusion}, $\tau(\mathcal V)\subseteq\tau(\mathcal U_{d_w})$.  
Under $\Delta_2$ one obtains the reverse inclusion, hence equality (see \cite[Thm.~3, Cor.~1]{MushaandjaOlela2025}).  
Then the basic $\mathcal V$-balls are exactly the $d_w$-balls of radius $1/n$.

(b) With $\tau(\mathcal V)=\tau(\mathcal U_{d_w})$, the uniformity $\mathcal V$ is generated by $d_w$, so the two Cauchy notions coincide.

(c) Compactness in a metric (uniform) space is equivalent to completeness plus total boundedness; apply this to $d_w$.
\end{proof}

\begin{remark}\label{rem:compare-fuzzy}
Definitions~\ref{def:V-Cauchy} and~\ref{def:precompact} are modular analogues of
Cauchy sequences and precompactness in fuzzy metric spaces.
Theorem~\ref{thm:HB-modular} parallels the compactness characterizations of 
George-Veeramani and Gregori-Romaguera.
\end{remark}

%-------------------------------------Section 5 ----------------------------------------------------------------------------------------------------

\section{Functional Analytic Connections}
Connections with Orlicz and Musielak-Orlicz spaces are standard
\cite{Musielak1983,RaoRen1991}. In this section we record how the modular
(pseudo)metric viewpoint packages several familiar functional--analytic
concepts; see also \cite{HudzikMaligranda1994} for tools around $s$-convexity
that enter compactness and convexity arguments, and \cite[Chap.~3]{HarjulehtoHasto}
for the modern generalized Orlicz setting.

\subsection{Modular convergence versus $\tau(\mathcal V)$-convergence}
Let $(\Omega,\Sigma,\mu)$ be a measure space and let $\rho$ be a (semi)modular
on a linear lattice $\mathcal L\subset \{u:\Omega\to\mathbb R\}$ (e.g. an
$N$-function modular for Orlicz/Musielak-Orlicz spaces). Consider the
Chistyakov-type modular
\[
w_\lambda(u,v):=\rho\!\Big(\frac{u-v}{\lambda}\Big),\qquad \lambda>0,\ u,v\in\mathcal L.
\]
Then $w$ is a convex pseudomodular on $\mathcal L$ and the induced uniformity
$\mathcal V=\mathcal V(w)$ on $X_w^\ast$ is generated by the basic entourages
$V_n=\{(u,v):\rho(n(u-v))<1/n\}$.

\begin{proposition}\label{prop:FA-top-equiv}
For $u_k,u\in X_w^\ast$ the following are equivalent:
\begin{enumerate}
\item $u_k\to u$ in $\tau(\mathcal V)$;
\item for every $\varepsilon>0$ there exists $\lambda>0$ with
$\rho((u_k-u)/\lambda)<\varepsilon$ for all sufficiently large $k$;
\item $d_w^\ast(u_k,u)\to 0$, where $d_w^\ast$ is the basic metric of the convex
case.
\end{enumerate}
If, in addition, $\rho$ satisfies the $\Delta_2$-condition, then these are
equivalent to $d_w(u_k,u)\to 0$ for the Luxemburg-type pseudometric $d_w$
(Definition~\ref{def:basicmetrics}), and hence to convergence in the
Luxemburg norm whenever this norm is defined.
\end{proposition}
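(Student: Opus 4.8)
The plan is to collapse all three conditions onto convergence in the basic metric $d_w^\ast$ and to extract everything else from the sub-homogeneity hidden in convexity. Since $w$ is convex, $\lambda\mapsto w_\lambda(u,v)$ is nonincreasing (as already used in Proposition~\ref{prop:Cauchy-equivalences}), and setting $z=y$ in the convexity inequality (Definition~\ref{def:phiconvex} with $\varphi=\mathrm{id}$) kills the second term and yields the scaling estimate
\[
w_{\lambda'}(u,v)\le\frac{\lambda}{\lambda'}\,w_{\lambda}(u,v)\qquad(\lambda'\ge\lambda>0).
\]
By Proposition~\ref{prop:Cauchy-equivalences} together with Remark~\ref{rem:totally-bounded}, in the convex case the uniformity $\mathcal V$ is generated by $d_w^\ast$, so $\tau(\mathcal V)$-convergence \emph{is} $d_w^\ast$-convergence; this disposes of (1)$\Leftrightarrow$(3) immediately, and the substance of the argument is the equivalence with the modular condition (2).

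For (3)$\Rightarrow$(2) I would argue quantitatively. Fix $\lambda>0$ and take $k$ large enough that $d_w^\ast(u_k,u)<\lambda$. Choosing any $\mu$ with $d_w^\ast(u_k,u)<\mu<\lambda$ forces $w_\mu(u_k,u)\le 1$ (the set $\{\mu:w_\mu\le 1\}$ is an upper interval by monotonicity), and the scaling estimate then gives
\[
\rho\!\Big(\tfrac{u_k-u}{\lambda}\Big)=w_{\lambda}(u_k,u)\le\frac{\mu}{\lambda}\,w_{\mu}(u_k,u)\le\frac{d_w^\ast(u_k,u)}{\lambda}.
\]
Letting $k\to\infty$ shows $w_\lambda(u_k,u)\to 0$ for \emph{every} $\lambda>0$, which is (2).

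For the converse (2)$\Rightarrow$(3) I would run the same inequality backwards: given $\delta>0$, condition (2) supplies control at the scale $\lambda=\delta$, so $\rho((u_k-u)/\delta)<1$ for all large $k$, whence $d_w^\ast(u_k,u)\le\delta$ eventually; as $\delta$ is arbitrary, $d_w^\ast(u_k,u)\to 0$. This direction is where the care lies, and it is the main obstacle: the step is legitimate only because (2) is read as modular convergence at \emph{all} scales, i.e.\ for each prescribed $\delta$ one is permitted to test at a scale $\le\delta$. Single-scale modular smallness is strictly weaker than norm convergence without $\Delta_2$ — the exponential Orlicz modular $\rho(u)=\int(e^{|u|}-1)$ admits sequences with $\rho(u_k)\to 0$ yet $\|u_k\|_\rho\not\to 0$ — because the convexity estimate propagates smallness only toward \emph{larger} scales and never downward. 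That asymmetry is precisely what $\Delta_2$ repairs.

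This dictates the proof of the final clause. Under $\Delta_2$, Theorem~\ref{thm:delta2} gives $\tau(\mathcal V)=\tau(\mathcal U_{d_w})$, so (1) coincides with $d_w$-convergence; and the scale-doubling of Definition~\ref{def:delta2} upgrades smallness of $w_\lambda$ at one scale to the halved scale, hence by iteration to all scales, so the single-scale and all-scale readings of (2) merge. Since $\|u-v\|_\rho=d_w^\ast(u,v)$ by construction, (3) is exactly Luxemburg-norm convergence, and the loop closes. I would therefore cite Theorem~\ref{thm:delta2} for the topological identity and Definition~\ref{def:delta2} for the scale-doubling, concluding that under $\Delta_2$ both $d_w(u_k,u)\to 0$ and $\|u_k-u\|_\rho\to 0$ enter the list of equivalent conditions.
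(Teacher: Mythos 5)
Your argument is correct, and it is both more quantitative and more careful than the paper's own proof, which simply runs the cycle (1)$\Rightarrow$(2)$\Rightarrow$(3)$\Rightarrow$(1) with one-line justifications: (1)$\Rightarrow$(2) by unwinding the entourages $V_n$, (2)$\Rightarrow$(3) ``by definition of $d_w^\ast$'', and (3)$\Rightarrow$(1) because $d_w^\ast$-balls generate $\tau(\mathcal V)$. You instead get (1)$\Leftrightarrow$(3) directly from the identification of $\mathcal V$ with the $d_w^\ast$-uniformity (Proposition~\ref{prop:Cauchy-equivalences}, Remark~\ref{rem:totally-bounded}) and isolate the modular condition (2); your scaling estimate $w_{\lambda'}\le(\lambda/\lambda')\,w_\lambda$ for $\lambda'\ge\lambda$, obtained by setting $z=y$ in the convexity inequality, yields a genuinely quantitative proof of (3)$\Rightarrow$(2) --- indeed of the stronger statement $\rho((u_k-u)/\lambda)\to 0$ for \emph{every} fixed $\lambda>0$ --- which the paper leaves implicit. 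Your handling of the final clause via Theorem~\ref{thm:delta2} and the identity $d_w^\ast(u,v)=\|u-v\|_\rho$ agrees with the paper's.

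The obstacle you flag in (2)$\Rightarrow$(3) is a genuine defect of the \emph{statement}, not of your proof, and you are right to insist on it. As written, (2) quantifies $\lambda$ existentially, which is exactly classical modular ($\rho$-)convergence; without $\Delta_2$ this is strictly weaker than Luxemburg-norm convergence $d_w^\ast(u_k,u)=\|u_k-u\|_\rho\to 0$. Concretely, for $\rho(u)=\int_0^1(e^{|u|}-1)\,dx$ and $u_k=\log(1/a_k)\,\chi_{[0,a_k]}$ with $a_k\to 0$ one has $\rho(u_k/\lambda)=a_k^{\,1-1/\lambda}-a_k$, so $\rho(u_k/2)\to 0$ (hence (2) holds with the single witness $\lambda=2$) while $\|u_k\|_\rho\to 1$, so (3) fails. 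The paper's step ``(2) is equivalent to $d_w^\ast(u_k,u)\to 0$ by definition'' silently replaces the existential quantifier on $\lambda$ by a universal one; the proposition is true precisely under your all-scales reading of (2) (``for every $\varepsilon>0$ and every $\lambda>0$, eventually $\rho((u_k-u)/\lambda)<\varepsilon$''), and it should be restated accordingly, with the existential version recovered as an equivalent condition only under $\Delta_2$.
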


\begin{proof}
(1)$\Rightarrow$(2): If $u_k\to u$ in $\tau(\mathcal V)$, then eventually
$(u_k,u)\in V_n$ for each $n$, i.e. $\rho(n(u_k-u))<1/n$. Renaming
parameters gives (2).

(2)$\Rightarrow$(3): By definition of $d_w^\ast$ in the convex case, (2) is
equivalent to $d_w^\ast(u_k,u)\to 0$.

(3)$\Rightarrow$(1): Balls of $d_w^\ast$ generate $\tau(\mathcal V)$, hence
$d_w^\ast(u_k,u)\to 0$ implies $u_k\to u$ in $\tau(\mathcal V)$.

If $\rho$ satisfies $\Delta_2$, then $d_w^\ast$ and the Luxemburg-type
pseudometric $d_w$ are topologically equivalent (Theorem~\ref{thm:delta2}
and Proposition~\ref{prop:Delta2-case}); the last statement follows.
\end{proof}

\subsection{Luxemburg and Orlicz norms}
Assume $\rho$ is an $N$-function modular (Orlicz case) or a Musielak-Orlicz
modular. Recall the Luxemburg gauge
\[
\|u\|_\rho:=\inf\{\lambda>0:\ \rho(u/\lambda)\le 1\}.
\]
By construction $d_w^\ast(u,v)=\|u-v\|_\rho$ when $w_\lambda(u,v)=\rho((u-v)/\lambda)$.

\begin{corollary}
If $\rho$ satisfies $\Delta_2$, then
\[
\tau(\mathcal V)=\tau(\mathcal U_{d_w})=\tau(\|\cdot\|_\rho),
\]
so the modular uniformity, the pseudometric uniformity from $d_w$, and the
Luxemburg-norm topology coincide (cf.\ Theorem~\ref{thm:delta2}).
\end{corollary}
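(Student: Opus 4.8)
The plan is to establish the triple equality by reducing each of the two pairings to results already in hand. Since the Chistyakov-type modular $w_\lambda(u,v)=\rho((u-v)/\lambda)$ is convex, Remark~\ref{rem:totally-bounded} guarantees that the uniformity $\mathcal V$ is generated by the basic metric $d_w^\ast$, so $\tau(\mathcal V)=\tau(d_w^\ast)$. Combining this with the identity $d_w^\ast(u,v)=\|u-v\|_\rho$ recorded just above the statement yields $\tau(\mathcal V)=\tau(\|\cdot\|_\rho)$ at once, and it is worth stressing that this half of the corollary does \emph{not} require $\Delta_2$: convexity alone pins the $\mathcal V$-topology to the Luxemburg gauge.

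It remains to bring $\tau(\mathcal U_{d_w})$ into the fold, which is precisely where the growth hypothesis enters. By Theorem~\ref{thm:delta2} the equality $\tau(\mathcal V)=\tau(\mathcal U_{d_w})$ is equivalent to $w$ satisfying the modular $\Delta_2$-condition of Definition~\ref{def:delta2}, so the task is to transfer the Orlicz $\Delta_2$-growth of $\rho$ into that sequential statement. For a sequence $(u_k)$ and a limit $u$ one has directly
\[
w(\lambda,u_k,u)=\rho\!\Big(\frac{u_k-u}{\lambda}\Big),\qquad w(\lambda/2,u_k,u)=\rho\!\Big(\frac{2(u_k-u)}{\lambda}\Big),
\]
so Definition~\ref{def:delta2} asks exactly that $\rho((u_k-u)/\lambda)\to 0$ force $\rho(2(u_k-u)/\lambda)\to 0$. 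For a (Musielak--)Orlicz modular $\rho(f)=\int_\Omega \Phi(\cdot,|f|)\,d\mu$ the pointwise growth $\Phi(\cdot,2t)\le K\,\Phi(\cdot,t)$ integrates to $\rho(2f)\le K\rho(f)$, whence $\rho(2(u_k-u)/\lambda)\le K\,\rho((u_k-u)/\lambda)\to 0$; this verifies $w\in\Delta_2$.

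With $w\in\Delta_2$ in place, Theorem~\ref{thm:delta2} (equivalently Proposition~\ref{prop:Delta2-case}(a)) gives $\tau(\mathcal V)=\tau(\mathcal U_{d_w})$, and chaining this with $\tau(\mathcal V)=\tau(\|\cdot\|_\rho)$ from the first paragraph produces the full identification. The convergence equivalences of Proposition~\ref{prop:FA-top-equiv} then confirm that $\tau(\mathcal V)$-, $d_w$-, and Luxemburg-norm convergence all coincide, so the coincidence is topological and not merely formal.

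I expect the only genuine obstacle to be the $\Delta_2$-transfer when the estimate $\Phi(\cdot,2t)\le K\,\Phi(\cdot,t)$ holds only for large arguments $t\ge t_0$, the usual situation on a finite measure space. There the pointwise bound cannot be integrated globally; instead I would split $\Omega$ according to whether $2|u_k-u|/\lambda$ exceeds $t_0$, dominate the large-argument contribution by $K\,\rho((u_k-u)/\lambda)$, and control the small-argument contribution by noting that $\rho((u_k-u)/\lambda)\to 0$ already forces $\mu\{|u_k-u|/\lambda\ge t_0/2\}\to 0$ together with a uniform bound on $\Phi(\cdot,t)$ over $t\le t_0$. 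This is the single point at which the bare growth condition is insufficient and the measure-space structure must be invoked.
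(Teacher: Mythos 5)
Your proof is correct and follows the route the paper intends: the identity $d_w^\ast(u,v)=\|u-v\|_\rho$ together with the convex-case fact that $\mathcal V$ is generated by $d_w^\ast$ gives $\tau(\mathcal V)=\tau(\|\cdot\|_\rho)$, and Theorem~\ref{thm:delta2} supplies $\tau(\mathcal V)=\tau(\mathcal U_{d_w})$. The paper offers no argument beyond the citation, so your explicit transfer of the Orlicz growth bound $\Phi(\cdot,2t)\le K\,\Phi(\cdot,t)$ into the sequential $\Delta_2$-condition of Definition~\ref{def:delta2} (including the finite-measure splitting when the bound holds only for large $t$) fills in a step the paper leaves implicit.
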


\begin{remark}
Without $\Delta_2$, $\tau(\mathcal V)$ always refines the topology of modular
convergence and is contained in the Orlicz (Luxemburg) topology generated by
$d_w^0$; in particular $\tau(\mathcal V)$ remains metrizable and $T_1$, which is
useful for compactness arguments even beyond normability.
\end{remark}

\subsection{Completeness, reflexivity, and duality}

\begin{proposition}\label{prop:FA-completeness}
Let $L^\rho$ denote the (Musielak--)Orlicz class associated with $\rho$ and
equip it with the modular uniformity $\mathcal V(w)$. If $\rho$ satisfies
$\Delta_2$ near $0$, then $(L^\rho,\mathcal V)$ is complete if and only if the
Luxemburg normed space $(L^\rho,\|\cdot\|_\rho)$ is Banach. In particular, the
usual completeness results for Orlicz and Musielak-Orlicz spaces transfer verbatim to
$(L^\rho,\mathcal V)$ (cf.\ \cite[Chap.~3]{HarjulehtoHasto}).
\end{proposition}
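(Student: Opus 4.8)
The plan is to reduce the biconditional to two facts: the formal identification of the modular uniformity $\mathcal V$ with the Luxemburg-norm metric, and the convex-case completeness equivalence already recorded in Corollary~\ref{cor:complete-equivalences}. Since $w_\lambda(u,v)=\rho((u-v)/\lambda)$ is a convex pseudomodular, the basic metric of Definition~\ref{def:basicmetrics} evaluates as $d_w^\ast(u,v)=\inf\{\lambda>0:\rho((u-v)/\lambda)\le 1\}=\|u-v\|_\rho$, so $(X_w^\ast,d_w^\ast)$ is precisely the normed space $(X_w^\ast,\|\cdot\|_\rho)$ viewed as a metric space. First I would record this identification explicitly, noting that it is purely formal and does not yet invoke any growth condition.

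Next I would pin down the underlying set and the role of the hypothesis. The point of assuming $\rho$ satisfies $\Delta_2$ near $0$ is twofold: it translates into the $\Delta_2$-condition for the induced modular $w$ in the sense of Definition~\ref{def:delta2}, and it forces the modular set $X_w^\ast$ to coincide with the Orlicz class $L^\rho$, which then becomes the full linear Luxemburg space. This is the classical Orlicz-space fact that under $\Delta_2$ the class $\{u:\rho(u)<\infty\}$ equals the Luxemburg space and that modular convergence agrees with norm convergence (compatible with Proposition~\ref{prop:FA-top-equiv}, Theorem~\ref{thm:delta2}, and Proposition~\ref{prop:Delta2-case}); I would cite it via \cite[Chap.~3]{HarjulehtoHasto} rather than reprove it. Once $X_w^\ast=L^\rho$ as sets and $d_w^\ast=\|\cdot\|_\rho$, the uniformity $\mathcal V$ on $L^\rho$ is exactly the norm uniformity.

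With these identifications in hand the equivalence is immediate. By Corollary~\ref{cor:complete-equivalences}, modular completeness of $(X_w^\ast,\mathcal V)$ is equivalent to $d_w^\ast$-completeness; and a normed space is Banach precisely when it is complete as a metric space under its norm metric. Chaining these yields
\[
(L^\rho,\mathcal V)\ \text{complete}\iff (L^\rho,d_w^\ast)\ \text{complete}\iff (L^\rho,\|\cdot\|_\rho)\ \text{Banach},
\]
which is the asserted biconditional. The concluding ``in particular'' clause then follows by quoting the classical theorem that Luxemburg spaces of Orlicz and Musielak--Orlicz type are Banach, so the standard completeness results transfer verbatim to $(L^\rho,\mathcal V)$.

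The main obstacle is the middle step rather than the formal chaining: one must be certain that $\Delta_2$ \emph{near $0$}, as opposed to a global growth condition, already suffices to force $X_w^\ast=L^\rho$ and to prevent a $\mathcal V$-Cauchy sequence from having its limit escape the class. This is exactly where the hypothesis is consumed, and care is warranted because over a non-finite measure space the localized condition interacts with integrability only near the origin. I would isolate this as the single nontrivial verification and handle it by the standard truncation argument, citing \cite[Chap.~3]{HarjulehtoHasto} for the Musielak--Orlicz case.
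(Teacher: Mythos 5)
Your proof is correct, and it reaches the conclusion by a slightly different route than the paper. The paper's own proof is a one-step reduction to Proposition~\ref{prop:Delta2-case}: under $\Delta_2$ the modular uniformity $\mathcal V$ coincides with the uniformity of the Luxemburg-type pseudometric $d_w$, so $\mathcal V$-Cauchy equals norm-Cauchy and completeness transfers. You instead exploit the convexity of $\rho$ (hence of $w$) to invoke Proposition~\ref{prop:Cauchy-equivalences} and Corollary~\ref{cor:complete-equivalences}, identify $d_w^\ast(u,v)=\|u-v\|_\rho$ as a purely formal consequence of the definitions, and reserve the $\Delta_2$-near-$0$ hypothesis exclusively for the set-level identification $X_w^\ast=L^\rho$ (Orlicz class equals Luxemburg space, so that the uniformity actually lives on all of $L^\rho$ and limits cannot escape the class). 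This is a sharper bookkeeping of hypotheses than the paper's: your argument makes visible that the Cauchy/completeness equivalence between $\mathcal V$ and $\|\cdot\|_\rho$ needs only convexity, while $\Delta_2$ is consumed in making the statement about $L^\rho$ well-posed — a point the paper's proof glosses over by folding everything into Proposition~\ref{prop:Delta2-case}. Your closing caveat about the "near $0$" versus global condition over non-finite measure spaces is well taken and is not addressed in the paper at all; isolating it as the single nontrivial verification, with the truncation argument from \cite[Chap.~3]{HarjulehtoHasto}, is exactly the right way to close the gap.
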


\begin{proof}
Under $\Delta_2$ near $0$, the modular uniformity $\mathcal V$ agrees with the
metric uniformity of a Luxemburg-type pseudometric equivalent to
$\|\cdot\|_\rho$ (Proposition~\ref{prop:Delta2-case}). Thus $\mathcal V$-Cauchy
is equivalent to norm-Cauchy, giving equivalence of completeness.
\end{proof}

\begin{proposition}\label{prop:FA-reflexivity}
If $\rho$ is uniformly convex in the sense of Orlicz theory (e.g. both $\rho$
and its complementary modular satisfy $\Delta_2$ and appropriate convexity
bounds), then $(L^\rho,\mathcal V)$ is uniformly convex for the metric
$d_w^\ast=\|\cdot\|_\rho$ and hence reflexive as a Banach space. Consequently,
bounded sets are $\mathcal V$-precompact in the weak topology, and the usual
Milman--Pettis consequences apply \cite{Musielak1983,RaoRen1991}.
\end{proposition}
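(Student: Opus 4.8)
The plan is to reduce the statement to the classical geometry of the Luxemburg-normed space $(L^\rho,\|\cdot\|_\rho)$ and then transport every conclusion across the topological identification established earlier. First I would record that, by construction, the convex-case basic metric is exactly the Luxemburg distance:
\[
d_w^\ast(u,v)=\inf\{\lambda>0:\ \rho((u-v)/\lambda)\le 1\}=\|u-v\|_\rho .
\]
Since $\rho$ satisfies $\Delta_2$ and, by hypothesis, so does its complementary modular, Theorem~\ref{thm:delta2} together with Proposition~\ref{prop:Delta2-case} gives $\tau(\mathcal V)=\tau(\mathcal U_{d_w})=\tau(\|\cdot\|_\rho)$, with $\mathcal V$ generated by the norm metric. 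Thus $(L^\rho,\mathcal V)$ and $(L^\rho,\|\cdot\|_\rho)$ are the same uniform space (indeed the same topological vector space), and any norm-topological property of one holds for the other.

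Next I would invoke the standard transfer from modular uniform convexity to norm uniform convexity in Orlicz theory: under uniform convexity of $\rho$ together with $\Delta_2$ for both $\rho$ and its complementary function, the Luxemburg norm $\|\cdot\|_\rho$ is uniformly convex as a Banach-space norm. Rather than re-derive the modulus of convexity, I would cite the quantitative estimates of Hudzik--Maligranda~\cite{HudzikMaligranda1994} and the Orlicz-space treatments in \cite{RaoRen1991,Musielak1983}, and I would take completeness of $(L^\rho,\|\cdot\|_\rho)$ (that it is Banach) from Proposition~\ref{prop:FA-completeness}. Uniform convexity plus completeness then yields reflexivity by the Milman--Pettis theorem.

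Reflexivity in turn gives, via Kakutani's theorem (equivalently the Eberlein--\v{S}mulian theorem), that the closed unit ball, and hence every bounded set, is relatively compact in the weak topology $\sigma(L^\rho,(L^\rho)^\ast)$; transporting through the Step-1 identification, this is precisely the asserted precompactness of bounded sets in the weak topology of $(L^\rho,\mathcal V)$. The remaining Milman--Pettis consequences—weak sequential compactness of bounded sequences, proximinality of closed convex subsets, and weak lower semicontinuity of the norm—follow formally from reflexivity and uniform convexity.

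The main obstacle I expect is the second step: pinning down what ``uniformly convex in the sense of Orlicz theory'' demands of $\rho$, and verifying that the stated modular hypotheses genuinely force uniform convexity of the Luxemburg norm. This is where the analytic content sits—the two $\Delta_2$ conditions already secure reflexivity, but norm uniform convexity additionally requires quantitative convexity bounds on $\rho$ and their passage through the gauge $\|\cdot\|_\rho$. I would therefore state these hypotheses exactly as in the cited literature and invoke their conclusion; once that is granted, the rest of the proposition is a formal consequence of the identification $\tau(\mathcal V)=\tau(\|\cdot\|_\rho)$ together with the Milman--Pettis and Kakutani theorems.
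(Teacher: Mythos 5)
Your proposal is correct and follows essentially the same route as the paper's own (much terser) proof: identify $d_w^\ast$ with the Luxemburg norm, cite the Orlicz-theory literature for uniform convexity of $\|\cdot\|_\rho$ under the stated hypotheses, apply Milman--Pettis for reflexivity, and transfer the weak-compactness consequences through the $\Delta_2$ identification $\tau(\mathcal V)=\tau(\|\cdot\|_\rho)$. Your explicit flagging of the imprecision in ``uniformly convex in the sense of Orlicz theory'' is a fair observation about the statement itself, but it does not change the argument, which the paper also resolves by deferring to \cite{Musielak1983,RaoRen1991}.
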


\begin{proof}
Uniform convexity of $\|\cdot\|_\rho$ implies uniform convexity of the metric
$d_w^\ast=\|\,\cdot\,\|_\rho$; reflexivity follows from Milman-Pettis. Since
$\mathcal V$ coincides with the metric uniformity under $\Delta_2$
(Proposition~\ref{prop:Delta2-case}), weak compactness/precompactness
consequences transfer verbatim.
\end{proof}

\begin{proposition}\label{prop:FA-duality}
Assume $\rho$ and its complementary modular $\rho^\ast$ both satisfy
$\Delta_2$. Then $(L^\rho)^\ast\simeq L^{\rho^\ast}$ via
\[
F_v(u)=\int_\Omega u\,v\,d\mu,\qquad u\in L^\rho,\ v\in L^{\rho^\ast},
\]
with $\|F_v\|=\|v\|_{\rho^\ast}$. This identification is isometric both for the
Luxemburg norms and for the metric $d_w^\ast$ generating $\tau(\mathcal V)$.
\end{proposition}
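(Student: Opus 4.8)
The plan is to reduce Proposition~\ref{prop:FA-duality} to the classical Orlicz (Musielak--Orlicz) duality theorem and then transport the resulting isometry through the identifications $d_w^\ast=\|\cdot\|_\rho$ and $\tau(\mathcal V)=\tau(\|\cdot\|_\rho)$ established in the preceding subsections. The argument splits into three movements: boundedness and injectivity of the map $v\mapsto F_v$, surjectivity via Radon--Nikodym, and the sharp norm computation.

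First I would verify that $F_v$ is a well-defined bounded functional on $L^\rho$ for each $v\in L^{\rho^\ast}$. This follows from the H\"older inequality for Orlicz spaces, $\int_\Omega|uv|\,d\mu\le 2\,\|u\|_\rho\,\|v\|_{\rho^\ast}$, obtained by applying the Young inequality $ab\le\rho(a)+\rho^\ast(b)$ to the normalized integrands $u/\|u\|_\rho$ and $v/\|v\|_{\rho^\ast}$ and integrating. Injectivity is immediate, since $F_v=0$ forces $\int_E v\,d\mu=0$ for all $E$ of finite measure and hence $v=0$ $\mu$-a.e. The hypothesis $\rho\in\Delta_2$ already enters here, ensuring that $L^\rho$ coincides with the Orlicz class, so that finitely supported simple functions are $\|\cdot\|_\rho$-dense and the pairing is non-degenerate.

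The core of the proof is surjectivity. Given $F\in(L^\rho)^\ast$, I would set $\nu(E):=F(\chi_E)$ for measurable $E$ with $\mu(E)<\infty$ and check that $\nu$ is finitely additive, countably additive (using continuity of $F$ together with the $\Delta_2$-driven dominated convergence available in $L^\rho$), and absolutely continuous with respect to $\mu$. Radon--Nikodym then yields a density $v$ with $F(\chi_E)=\int_E v\,d\mu$; extending by linearity and $\|\cdot\|_\rho$-continuity across the dense class of simple functions gives $F=F_v$ on all of $L^\rho$. To see $v\in L^{\rho^\ast}$ one tests $F$ against a saturating family realizing equality in Young's inequality (prototypically $u_n=\operatorname{sgn}(v)\,(\rho^\ast)'(|v|)\,\chi_{\{|v|\le n\}}$, or the subdifferential analogue when $\rho^\ast$ is not differentiable), which bounds $\int_\Omega\rho^\ast(|v|)\,d\mu$ in terms of $\|F\|$ and forces $v\in L^{\rho^\ast}$. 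The $\Delta_2$-condition on $\rho^\ast$ is decisive at this step: without it the candidate density may leave $L^{\rho^\ast}$ and a singular component can appear in the dual.

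Finally, for the norm identity I would combine the H\"older upper bound with a matching lower bound obtained by selecting $u$ so that $uv=\rho(u)+\rho^\ast(v)$ pointwise (the equality case of Young) and normalizing, yielding $F_v(u)\ge(1-\varepsilon)\|v\|_{\rho^\ast}\|u\|_\rho$; here the precise constant depends on pairing the Luxemburg norm against the Orlicz (Amemiya) norm, and the stated isometry holds under this convention or up to the standard equivalence of the two Orlicz norms. The isometric statement for $d_w^\ast$ is then automatic, since $d_w^\ast(u,v)=\|u-v\|_\rho$ and $\tau(\mathcal V)=\tau(\|\cdot\|_\rho)$ under $\Delta_2$ (Theorem~\ref{thm:delta2} and Proposition~\ref{prop:Delta2-case}), so the same pairing is isometric for the metric generating $\tau(\mathcal V)$. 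I expect the surjectivity step---specifically, promoting the Radon--Nikodym density to an element of $L^{\rho^\ast}$ with the correct norm---to be the main obstacle, as this is exactly where the two-sided $\Delta_2$ hypothesis does the decisive work and where careful handling of the equality case of Young's inequality is required.
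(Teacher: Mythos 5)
Your argument is correct and follows the same route as the paper, which simply cites the classical Orlicz/Musielak--Orlicz duality theorem (H\"older, Radon--Nikodym, two-sided $\Delta_2$) and then notes that the $d_w^\ast$-isometry is automatic from $d_w^\ast(u,v)=\|u-v\|_\rho$; your proposal just unpacks the standard proof that the paper leaves to the references. Your parenthetical caveat that the exact equality $\|F_v\|=\|v\|_{\rho^\ast}$ requires pairing the Luxemburg norm with the Orlicz (Amemiya) norm --- and otherwise holds only up to the usual constant $2$ --- is a legitimate point of precision that the paper's statement glosses over.
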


\begin{proof}
This is the standard Orlicz duality (see \cite[Chap.~3]{HarjulehtoHasto},
\cite[Chap.~II]{Musielak1983}); the last sentence uses that $d_w^\ast$ and
$\|\cdot\|_\rho$ induce the same uniformity under $\Delta_2$.
\end{proof}

\subsection{Compactness criteria of modular type}

\begin{theorem}\label{thm:modular-KR}
Let $(\Omega,\Sigma,\mu)$ be a finite measure space and let $L^\rho$ be an
Orlicz (or Musielak--Orlicz) space with modular
\[
\rho(f)=\int_\Omega \Phi(x,|f(x)|)\,d\mu(x),
\]
where $\Phi$ is a convex Carath\'eodory integrand. Equip $L^\rho$ with the
modular uniformity $\mathcal V$, i.e. basic entourages are of the form
$\{(u,v):\rho((u-v)/\lambda)\le \varepsilon\}$ for some $\lambda>0$,
$\varepsilon>0$. Let $A\subset L^\rho$ be $\mathcal V$-bounded. Suppose:
\begin{itemize}
\item[(T)] (\emph{Tightness}) For each $\varepsilon>0$ there exists
$E\in\Sigma$ with $\mu(\Omega\setminus E)<\varepsilon$ and some $\lambda_T>0$
such that
\[
\sup_{u\in A}\ \rho\!\big((u-u\chi_E)/\lambda_T\big)\le \varepsilon.
\]
\item[(EMC)] (\emph{Equi-modular continuity}) For each $\varepsilon>0$ there
exists $\delta>0$ such that for all $B\in\Sigma$ with $\mu(B)<\delta$ there is
$\lambda_C>0$ with
\[
\sup_{u\in A}\ \rho\!\big(u\chi_B/\lambda_C\big)\le \varepsilon.
\]
\end{itemize}
Then $A$ is relatively $\mathcal V$-compact in $L^\rho$. If, in addition,
$\Phi$ satisfies the $\Delta_2$-condition, then the modular and Luxemburg
topologies coincide and the criterion reduces to the classical
Kolmogorov-Riesz compactness in $L^\rho$.
\end{theorem}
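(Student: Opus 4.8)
The plan is to reduce the assertion to a total-boundedness statement and then run a finite-dimensional approximation in the Kolmogorov-Riesz style, adapted to the modular setting. First I would invoke the convex-case identifications of the previous section: since $w_\lambda(u,v)=\rho((u-v)/\lambda)$ is convex, Remark~\ref{rem:totally-bounded} shows that $\mathcal V$ is generated by the Luxemburg metric $d_w^\ast(u,v)=\|u-v\|_\rho$, and $(L^\rho,\|\cdot\|_\rho)$ is a Banach space, hence $(L^\rho,\mathcal V)$ is modularly complete. By the standard fact that in a complete metric space relative compactness coincides with total boundedness (together with the precompactness machinery of Remark~\ref{rem:totally-bounded} and Theorem~\ref{thm:HB-modular}), relative $\mathcal V$-compactness of $A$ is equivalent to $\mathcal V$-precompactness, i.e.\ to total boundedness of $A$ in $d_w^\ast$. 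So it suffices, for each fixed $n$, to cover $A$ by finitely many basic balls $B_{\mathcal V}(\cdot;n)$.

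Next I would set up the approximation. Fix $n$ and $\varepsilon>0$. Using tightness (T), choose $E$ with $\mu(\Omega\setminus E)$ small enough that $u\chi_{\Omega\setminus E}$ is modularly negligible uniformly over $u\in A$. Partition $E$ into finitely many sets $E_1,\dots,E_m$ of measure below the $\delta$ furnished by (EMC) (keeping any atoms as their own pieces), and let $P$ be the conditional-expectation (averaging) operator onto functions constant on each $E_j$ and vanishing off $E$, namely $Pu=\sum_j\big(\mu(E_j)^{-1}\!\int_{E_j}u\,d\mu\big)\chi_{E_j}$. Because $\Phi(x,\cdot)$ is convex, Jensen's inequality makes $P$ a modular, hence Luxemburg-norm, contraction, $\rho(Pu)\le\rho(u)$; thus $P(A)$ is bounded and lies in the finite-dimensional space $\mathrm{span}\{\chi_{E_1},\dots,\chi_{E_m}\}$, where bounded sets are totally bounded. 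A finite $d_w^\ast$-net for $P(A)$ therefore exists.

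The crux is the approximation estimate: I must show that $\sup_{u\in A}d_w^\ast(u,Pu)$ -- equivalently the modular $\rho\big(c(u-Pu)\big)$ for a suitable scale $c$ -- is small, so that the net for $P(A)$ pulls back to a net for $A$ inside $V_n$-balls. Here (T) controls the part off $E$, while on $E$ the oscillation $u-Pu$ is handled piecewise: Jensen applied on each $E_j$ bounds $\int_{E_j}\Phi(x,|u-Pu|/c)\,d\mu$ by the mean oscillation of $u$, which is governed by (EMC) once the pieces are fine enough. The delicate points, which I expect to be the main obstacle, are (i) absorbing the floating rescalings $\lambda_T,\lambda_C$ of (T) and (EMC) together with $c$ into a single scale, using the convexity of $\rho$ to combine the tail and piecewise contributions into one estimate that lands inside $B_{\mathcal V}(\cdot;n)$; and (ii) guaranteeing, on a nonatomic part of $\mu$, that a refining sequence of such partitions drives $\|u-P_ku\|_\rho\to0$ uniformly over $A$. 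This last uniform-approximation (martingale/equicontinuity) content is precisely where the convexity of $\Phi$, via the Jensen contraction property of $P$, and both hypotheses are genuinely used; it is the modular analogue of the translation-equicontinuity step in the classical theorem.

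Finally, for the $\Delta_2$ addendum, Theorem~\ref{thm:delta2} and Proposition~\ref{prop:Delta2-case} give $\tau(\mathcal V)=\tau(\mathcal U_{d_w})=\tau(\|\cdot\|_\rho)$ and the coincidence of $\mathcal V$-Cauchy with norm-Cauchy sequences. The floating scales in (T) and (EMC) may then be fixed uniformly, so that the modular conditions collapse to the usual boundedness, tightness, and equi-integrability hypotheses, and the statement reduces to the classical Kolmogorov-Riesz compactness theorem in $L^\rho$ (cf.\ \cite[Chap.~3]{HarjulehtoHasto}).
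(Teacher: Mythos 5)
Your overall architecture coincides with the paper's: cut the tail with (T), average over a finite partition of $E$, use convexity of $\Phi$ and Jensen to make $P$ a modular contraction, exploit finite-dimensionality of $P[A]$ for a finite net, and absorb the floating scales $\lambda_T,\lambda_C$ into a single $\Lambda$ by modular subadditivity. The reduction to total boundedness via completeness of $(L^\rho,\|\cdot\|_\rho)$ and the $\Delta_2$ addendum are also fine. The problem is the step you yourself flag as the crux and then leave open: the claim that the piecewise mean oscillation $\int_{E_j}\Phi\bigl(x,|u-Pu|/c\bigr)\,d\mu$ is ``governed by (EMC) once the pieces are fine enough.'' It is not. (EMC) is an equi-integrability condition: it controls the \emph{size} of $u$ on sets of small measure, not its \emph{oscillation} there. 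On a finite measure space a uniformly bounded family automatically satisfies both (T) and (EMC) at every scale, yet need not be relatively compact: take $A=\{u_k\}$ in $L^1[0,1]$ with $u_k(x)=\mathrm{sign}\bigl(\sin(2^k\pi x)\bigr)$. For any fixed finite partition, $Pu_k\to 0$ in measure as $k\to\infty$ while $|u_k|\equiv 1$, so $\|u_k-Pu_k\|_1$ stays bounded away from $0$; no refining sequence of partitions approximates $A$ uniformly, and $A$ has no modularly convergent subsequence. Hence the uniform estimate $\sup_{u\in A}\rho\bigl(c(u-Pu)\bigr)\le C\varepsilon$ cannot be derived from (T) and (EMC) alone, and your proof does not close.

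For comparison, the paper's proof handles this same step by rewriting the Jensen double integral over $Q_i\times Q_i$ as a translation-difference integral $\int_{|y|\le C\eta}\int_\Omega\Phi\bigl(x,|u(x)-u(x+y)|/\lambda\bigr)\,d\mu(x)\,dy$ for $\Omega\subset\mathbb R^n$ and then demands that the inner integral be uniformly small for $|y|\le C\eta$. That is a genuine equicontinuity-of-translations requirement --- the modular analogue of the $L^p$-modulus-of-continuity hypothesis in the classical Kolmogorov--Riesz theorem --- which the paper attributes to (EMC) but which does not follow from (EMC) as stated. So your instinct that the missing content is ``the modular analogue of the translation-equicontinuity step'' identifies exactly the right ingredient; the fix is to \emph{assume} such an equi-oscillation condition (e.g.\ $\sup_{u\in A}\rho\bigl((u(\cdot+y)-u)/\lambda\bigr)\to 0$ uniformly as $y\to 0$, or a martingale version $\sup_{u\in A}\rho\bigl((u-P_ku)/\lambda\bigr)\to 0$ along a refining filtration), not to derive it from equi-integrability. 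Without that extra hypothesis neither your argument nor the one in the paper establishes relative $\mathcal V$-compactness.
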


\begin{proof}
Fix $\varepsilon\in(0,1)$. By (T) choose $E$ and $\lambda_T$ with
$\sup_{u\in A}\rho((u-u\chi_E)/\lambda_T)\le\varepsilon$, so every
$u\in A$ is well-approximated (modularly) by $u_E:=u\chi_E$.

Pick a finite measurable partition $\{Q_i\}_{i=1}^N$ of $E$ (e.g. small cubes
when $\Omega\subset\mathbb R^n$) and define the averaging operator
\[
Pu:=\sum_{i=1}^N (u)_{Q_i}\,\chi_{Q_i},\qquad
(u)_{Q_i}:=\frac{1}{\mu(Q_i)}\int_{Q_i}u\,d\mu.
\]
By convexity of $t\mapsto \Phi(x,t)$ and Jensen,
\[
\rho\!\big((u-Pu)/\lambda\big)\le
\sum_{i=1}^N\frac{1}{\mu(Q_i)}\int_{Q_i}\!\int_{Q_i}
\Phi\!\left(x,\frac{|u(x)-u(z)|}{\lambda}\right)\,d\mu(z)\,d\mu(x).
\]
When $\Omega\subset\mathbb R^n$, if $x,z\in Q_i$ then $y:=z-x$ satisfies
$|y|\le C\eta$ (with $\eta$ the mesh size), and the right-hand side is bounded
by a constant times
\[
\int_{|y|\le C\eta}\!\int_{\Omega}
\Phi\!\left(x,\frac{|u(x)-u(x+y)|}{\lambda}\right)\,d\mu(x)\,dy.
\]
By (EMC), choose $\eta>0$ and $\lambda_C>0$ so that the inner integral is
$\le\varepsilon$ uniformly in $u\in A$ for all $|y|\le C\eta$. Hence
\[
\sup_{u\in A}\ \rho\!\big((u-Pu)/\lambda_C\big)\le C_1\varepsilon.
\]
Decomposing $u-Pu=(u-u_E)+(u_E-Pu)$ and using a standard modular subadditivity
estimate yields, for $\Lambda:=\lambda_T+\lambda_C$,
\[
\sup_{u\in A}\ \rho\!\big((u-Pu)/\Lambda\big)\le C_2\varepsilon.
\]
The set $P[A]$ lies in a finite-dimensional subspace and is bounded, hence
totally bounded in the modular uniformity; choose a finite net
$\{v_1,\dots,v_m\}$ for $P[A]$. Then
\[
\rho\!\big((u-v_j)/(\Lambda+\lambda')\big)\le C_3\varepsilon
\]
for a suitable $\lambda'>0$, uniformly in $u\in A$, showing that $A$ is
relatively $\mathcal V$-compact.

Under $\Delta_2$, $\tau(\mathcal V)=\tau(\mathcal U_{d_w})$ and this becomes
the classical Kolmogorov--Riesz criterion in $L^\rho$
(cf.\ \cite{HANCHEOLSEN2010385,HarjulehtoHasto}).
\end{proof}

\subsection{Examples}
\begin{example}
For $\Phi(t)=t^p$ ($1\le p<\infty$), $\rho(u)=\int |u|^p$ and
$w_\lambda(u,v)=\int |u-v|^p/\lambda^p$. Then
$d_w^\ast(u,v)=\|u-v\|_{L^p}$ and $\tau(\mathcal V)$ is the $L^p$-topology.
\end{example}

\begin{example}
Let $\Phi(t)=e^{t^2}-1$ on a finite measure space. Then $\Delta_2$ fails at
$\infty$, so the Luxemburg topology can be strictly stronger than
$\tau(\mathcal V)$; nevertheless $\tau(\mathcal V)$ remains metrizable and
captures modular convergence $\rho((u_k-u)/\lambda)\to 0$.
\end{example}

\begin{example}
In the Musielak--Orlicz setting $\Phi(x,t)=t^{p(x)}$ with
$1<p_-\le p(x)\le p_+<\infty$ and log-H\"older continuity, the $\Delta_2$
condition holds, and $\tau(\mathcal V)$ agrees with the norm topology of
$L^{p(\cdot)}$ \cite[Chap.~7]{HarjulehtoHasto}.
\end{example}

\begin{remark}
$s$-convexity (in the sense of \cite{HudzikMaligranda1994}) provides flexible
upper bounds for modular functionals and is frequently used to prove continuity,
tightness, and interpolation estimates that feed into precompactness and
reflexivity statements above.
\end{remark}

%-------------------------------------Section 6 -----------------------------------------------------------------------------

\section{Categorical and Structural Perspectives}\label{sec:Categorical-Structural}

The categorical embedding of modular metric spaces into metrizable topological spaces is motivated by
the development initiated by Chistyakov \cite{Chistyakov2010,Chistyakov2015}. In categorical terms,
a modular (pseudo)metric space $(X,w)$ yields both a topological object $(X,\tau(w))$ and a
uniform object $(X,\mathcal V(w))$. The functorial relationship between these structures extends the
classical embedding of uniform spaces into completely regular $T_1$ spaces.
For general categorical perspectives on uniform spaces and enriched metric structures, we follow
Isbell \cite{Isbell1964} and Lawvere \cite{Lawvere1973}.

\subsection{Lawvere-enriched viewpoint}

Lawvere’s seminal idea \cite{Lawvere1973} interprets metric spaces as categories enriched over the
closed monoidal poset $([0,\infty],\ge,+,0)$. More generally, closed categories provide the background
setting for this formulation.

\begin{definition}[{\cite{Lawvere1973}}]
A \emph{closed category} is a bicomplete symmetric monoidal closed category; that is, one admitting
all small limits and colimits together with a symmetric closed monoidal structure.
\end{definition}

Typical examples include the two-point category $\mathbf{2}$, the ordered monoidal category
$\mathbf{R}$ of nonnegative reals with addition as tensor, and $\mathbf{S}$, the category of sets
with cartesian product as tensor.

\begin{definition}
Given a closed category $\mathcal C$, a \emph{strong category} valued in $\mathcal C$ consists of
objects $a,b,c,\dots$, hom-objects $X(a,b)\in\mathrm{Ob}(\mathcal C)$, composition morphisms
$X(a,b)\otimes X(b,c)\to X(a,c)$, and unit morphisms $k\to X(a,a)$, subject to the associativity
and unit laws in $\mathcal C$.
\end{definition}

From this perspective, modular (pseudo)metrics fit naturally: each scale parameter $\lambda>0$
defines a hom-object $w_\lambda(x,y)$, while modular subadditivity corresponds to enriched
composition. Thus modular metric spaces form strong categories enriched over $\mathbf{R}$.

\subsection{Yoneda embedding and adequacy}

Enriched category theory furnishes a canonical embedding in this setting:

\begin{lemma}[{\cite{Lawvere1973}}]
For any closed $\mathcal C$ and any $\mathcal C$-category $A$, the Yoneda embedding
\[
Y:A \longrightarrow \mathcal C^{A^{op}}, \qquad x \longmapsto A(-,x),
\]
is $\mathcal C$-full and faithful.
\end{lemma}

The Yoneda embedding allows one to reconstruct morphisms from their evaluation on
test objects. In the enriched metric setting, this translates into the following
adequacy criterion.

\begin{proposition}
Let $X$ be a metric space. A subspace $A\subseteq X$ is called \emph{adequate}
if the metric of $X$ can be recovered from the distance comparisons with points
in $A$, namely,
\[
X(x_1,x_2)
\;=\;
\sup_{a\in A}\big(X(a,x_2)-X(a,x_1)\big),
\qquad \forall\, x_1,x_2\in X.
\]
\end{proposition}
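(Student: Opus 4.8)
The plan is to read this identity as the isometric form of the enriched Yoneda embedding specialized to the metric enrichment over $\mathbf R=([0,\infty],\ge,+,0)$, and to prove it as a pair of opposite inequalities. Writing the right-hand side as $D(x_1,x_2):=\sup_{a\in A}\big(X(a,x_2)-X(a,x_1)\big)$, I would first observe that $D(x_1,x_2)$ is exactly the hom-distance in the presheaf category $\mathcal C^{A^{\mathrm{op}}}$ between the representable presheaves $Y(x_1)=X(-,x_1)$ and $Y(x_2)=X(-,x_2)$, computed with the internal hom $q\ominus p=\max(q-p,0)$ of $\mathbf R$. From this vantage the asserted equality $X(x_1,x_2)=D(x_1,x_2)$ is precisely the statement that $Y$ is $\mathcal C$-fully faithful on $A$ (equivalently, that $X$ is adequate in itself when $A=X$), which is what the preceding Yoneda lemma delivers; the explicit argument below simply unwinds that fact into two elementary estimates.

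For the inequality $D(x_1,x_2)\le X(x_1,x_2)$, which holds for \emph{every} subset $A\subseteq X$, I would invoke the triangle inequality: for each $a\in A$ one has $X(a,x_2)\le X(a,x_1)+X(x_1,x_2)$, whence $X(a,x_2)-X(a,x_1)\le X(x_1,x_2)$, and taking the supremum over $a$ preserves the bound. This direction is routine and uses nothing beyond subadditivity of the distance.

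The reverse inequality $X(x_1,x_2)\le D(x_1,x_2)$ is the crux, and it is exactly where adequacy enters. When $A=X$, or more generally whenever $x_1\in A$, evaluating the supremand at $a=x_1$ yields $X(x_1,x_2)-X(x_1,x_1)=X(x_1,x_2)$, so $D(x_1,x_2)\ge X(x_1,x_2)$ at once and the two inequalities combine to the stated identity. For a proper adequate subspace the evaluation $a=x_1$ is unavailable, so one must approximate: choosing $a\in A$ with $X(a,x_1)$ small and applying the reverse triangle estimate $X(a,x_2)-X(a,x_1)\ge X(x_1,x_2)-2X(a,x_1)$ forces $D(x_1,x_2)\ge X(x_1,x_2)$ once $A$ can be taken arbitrarily close to $x_1$. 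Here lies the main obstacle: for an arbitrary $A$ this lower bound fails outright, so the substance of the definition is that adequacy is precisely the hypothesis making it hold—concretely, density of $A$ in $X$ together with continuity of $a\mapsto X(a,x_2)-X(a,x_1)$ suffices. A final bookkeeping point I would verify is that the truncation built into $\ominus$ is harmless: since $\sup_a\max(f(a),0)=\max(\sup_a f(a),0)$ and the supremum $D(x_1,x_2)\ge 0$ by the $a\to x_1$ limit, the truncated and untruncated suprema coincide, so the formula may be stated with ordinary subtraction as written.
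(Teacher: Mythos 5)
Your proposal is correct in substance and follows the same conceptual route as the paper — reading the displayed identity as the statement that the restricted Yoneda embedding $x\mapsto X(-,x)|_A$ into $\mathcal C^{A^{\mathrm{op}}}$ is isometric (enriched fully faithful) over $([0,\infty],\ge,+,0)$. The difference is one of execution: the paper's proof is a purely verbal unwinding ("adequacy means the representables are determined by their restrictions to $A$"), whereas you actually split the identity into two inequalities and supply the mathematics. Your observation that $\sup_{a\in A}\bigl(X(a,x_2)-X(a,x_1)\bigr)\le X(x_1,x_2)$ holds for \emph{every} $A$ by the triangle inequality, so that the entire content of adequacy is the reverse inequality, is a genuine sharpening that the paper omits; so is your correct diagnosis that the "Proposition" is really a definition, with nothing to prove unless one fixes an independent notion of adequacy (your density-plus-reverse-triangle argument then gives a clean sufficient condition, matching the separability corollary that follows in the paper). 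Two small points to tighten. First, your claim that $D(x_1,x_2)\ge 0$ "by the $a\to x_1$ limit" presupposes that $A$ meets every neighbourhood of $x_1$, which is not part of the hypothesis for a general adequate $A$; the non-circular justification is simply that adequacy asserts $D(x_1,x_2)=X(x_1,x_2)\ge 0$, so the truncation in the internal hom $q\ominus p=\max(q-p,0)$ is harmless exactly when the formula holds. Second, for an arbitrary non-adequate $A$ the untruncated supremum can be strictly negative (e.g.\ $X=\{0,1,2\}\subset\mathbb R$, $A=\{2\}$, $x_1=0$, $x_2=1$ gives $D=-1$), so the truncated and untruncated forms genuinely differ outside the adequate case; this is worth saying explicitly since the paper states the formula with ordinary subtraction.
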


\begin{proof}
Under the Yoneda embedding, each $x\in X$ corresponds to the representable
functor $X(-,x):X\to[0,\infty]$. Adequacy means that these representables are
already determined by their restrictions to $A$. The supremum formula expresses
exactly that $X(x_1,x_2)$ is reconstructed from differences of evaluations on
elements of $A$, showing that $A$ reflects the full metric structure of $X$.
Conversely, if $A$ is adequate, the Yoneda reconstruction yields this equality,
so the two notions coincide.
\end{proof}

\begin{corollary}
Every separable metric space $X$ can be isometrically embedded into a subspace of
$[0,\infty)^{\mathbb N}$ equipped with the supremum metric.
\end{corollary}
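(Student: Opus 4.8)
The plan is to realize the embedding as the Fréchet--Kuratowski map associated to a countable dense set, recognizing it as an instance of the Yoneda reconstruction established in the preceding Proposition. First I would invoke separability of $X$ to fix a countable dense subset $A=\{a_n\}_{n\in\mathbb N}$. The guiding idea is that a dense subset is adequate in the sense just introduced: the representables $d(\cdot,x)$ are determined by their restrictions to $A$, so the assignment $x\mapsto\big(d(x,a_n)\big)_{n}$ should already encode the full metric. Accordingly, I define
\[
\Phi:X\longrightarrow[0,\infty)^{\mathbb N},\qquad \Phi(x):=\big(d(x,a_n)\big)_{n\in\mathbb N},
\]
whose coordinates are nonnegative reals, so $\Phi$ lands in $[0,\infty)^{\mathbb N}$, which I equip with the supremum metric $\rho(u,v)=\sup_{n}|u_n-v_n|$.

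Next I would verify the two inequalities that together give isometry. For the upper estimate, the reverse triangle inequality yields $|d(x,a_n)-d(y,a_n)|\le d(x,y)$ for every $n$, so $\rho(\Phi(x),\Phi(y))\le d(x,y)$; in particular each coordinate of $\Phi$ is $1$-Lipschitz and the supremum is finite, so $\Phi(X)$ is a genuine subspace on which $\rho$ is a bona fide metric. For the matching lower estimate I would exploit density: given $x,y\in X$ and $\varepsilon>0$, choose $a_n\in A$ with $d(x,a_n)<\varepsilon$; then $d(y,a_n)\ge d(x,y)-\varepsilon$ by the triangle inequality, whence $|d(x,a_n)-d(y,a_n)|\ge d(x,y)-2\varepsilon$. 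Letting $\varepsilon\to0$ gives $\rho(\Phi(x),\Phi(y))\ge d(x,y)$, and combined with the upper bound we obtain $\rho(\Phi(x),\Phi(y))=d(x,y)$. Thus $\Phi$ is an isometry of $X$ onto the subspace $\Phi(X)\subseteq[0,\infty)^{\mathbb N}$.

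The step I expect to be the main obstacle is conceptual rather than computational: reconciling the asymmetric adequacy formula $X(x_1,x_2)=\sup_{a\in A}\big(X(a,x_2)-X(a,x_1)\big)$ of the preceding Proposition, which naturally produces centered (signed) coordinate functions, with the symmetric requirement that the \emph{nonnegative} coordinates $d(\cdot,a_n)$ reproduce $d$ under the \emph{symmetric} supremum metric. The resolution is precisely the density argument above: since $d$ is symmetric, approximating $x$ by points of $A$ forces the one-sided differences to attain $d(x,y)$ in absolute value, so the uncentered nonnegative embedding is already isometric and no signed coordinates are needed. A minor point to record along the way is that $\rho$ is only an extended metric on the full product, but finiteness on $\Phi(X)$, guaranteed by the $1$-Lipschitz bound, makes $\Phi(X)$ a legitimate metric subspace, which is all the statement requires.
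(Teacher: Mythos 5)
Your proof is correct and follows essentially the route the paper intends: the corollary is stated there without proof as a consequence of the adequacy proposition, and your Fr\'echet--Kuratowski map $x\mapsto\big(d(x,a_n)\big)_n$ for a countable dense (hence adequate) set $A$ is exactly the restricted Yoneda embedding made concrete, with the reverse triangle inequality and density supplying the two halves of the isometry. Your closing remarks on finiteness of the supremum on the image and on reconciling the one-sided adequacy formula with the symmetric supremum metric are accurate and fill in details the paper leaves implicit.
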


These results show that modular metric spaces not only embed into metrizable
topological spaces but also admit fully faithful categorical embeddings that
respect their modular structure and scale-dependent enrichment.
\subsection{Kan extensions and Cauchy completeness}

A further categorical insight, due to Lawvere, concerns Kan extensions and their
role in describing completeness of enriched metric spaces.

\begin{theorem}[{\cite{Lawvere1973}}]
Let $\mathcal{C}$ be a closed category. For any $\mathcal{C}$-functor 
$f\colon X\to Y$, precomposition with $f$,
\[
-\circ f\;:\; [Y,\mathcal{C}] \longrightarrow [X,\mathcal{C}],
\]
admits both left and right adjoints, corresponding to the left and right Kan
extensions along $f$.
\end{theorem}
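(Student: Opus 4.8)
The plan is to produce the two adjoints directly via the standard pointwise (co)end formulas for enriched Kan extensions, using the bicompleteness that is built into the definition of a closed category above. Recall that $[X,\mathcal{C}]$ is the $\mathcal{C}$-category whose objects are $\mathcal{C}$-functors $X\to\mathcal{C}$ and whose hom-objects are objects of $\mathcal{C}$-natural transformations, and that precomposition carries a $\mathcal{C}$-functor $G\colon Y\to\mathcal{C}$ to $G\circ f\colon X\to\mathcal{C}$. The objective is to exhibit left and right adjoints $\mathrm{Lan}_f$ and $\mathrm{Ran}_f$ to $-\circ f$.

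First I would write down the candidate adjoints as weighted (co)limits. For $H\in[X,\mathcal{C}]$ set
\[
(\mathrm{Lan}_f H)(y)\;=\;\int^{x\in X} Y(fx,y)\otimes H(x),
\qquad
(\mathrm{Ran}_f H)(y)\;=\;\int_{x\in X}\{\,Y(y,fx),\,H(x)\,\},
\]
where $V\otimes(-)$ is the tensor (copower) supplied by the monoidal structure and $\{\,V,-\,\}$ is the cotensor (power) supplied by closedness; both are computed pointwise out of $\mathcal{C}$'s own monoidal-closed structure. Because $\mathcal{C}$ is cocomplete the coend defining $\mathrm{Lan}_f$ exists, and because $\mathcal{C}$ is complete the end defining $\mathrm{Ran}_f$ exists --- bicompleteness is exactly the hypothesis packaged into ``closed category.'' I would then check that each assignment is functorial in $y$, so that $\mathrm{Lan}_f H$ and $\mathrm{Ran}_f H$ are genuine objects of $[Y,\mathcal{C}]$, and that the construction extends to a $\mathcal{C}$-functor on $[X,\mathcal{C}]$.

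The heart of the argument is verifying the adjunction isomorphisms. Using the enriched Yoneda lemma together with the Fubini and co-Yoneda rules of (co)end calculus, I would compute
\[
[Y,\mathcal{C}]\big(\mathrm{Lan}_f H,\;G\big)
\;\cong\;
[X,\mathcal{C}]\big(H,\;G\circ f\big).
\]
Concretely, the left-hand side unfolds to $\int_{y}\mathcal{C}\big((\mathrm{Lan}_f H)(y),G(y)\big)$; turning the coend inside the hom into an end, applying the tensor--cotensor adjunction to move $Y(fx,y)$ across, exchanging the two ends by Fubini, and collapsing $\int_{y}\{\,Y(fx,y),G(y)\,\}\cong G(fx)$ by the enriched Yoneda reduction, reduces it to the common end $\int_{x}\mathcal{C}\big(H(x),\,G(fx)\big)$, which is exactly $[X,\mathcal{C}](H,G\circ f)$. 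The dual manipulation, starting from $\int_{y}\mathcal{C}\big(G(y),(\mathrm{Ran}_f H)(y)\big)$, gives the right-adjoint isomorphism.

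The main obstacle I anticipate is not the formal (co)end bookkeeping but the verification that these isomorphisms are genuinely \emph{enriched}: that they hold at the level of hom-objects in $\mathcal{C}$ and are $\mathcal{C}$-natural in both variables, rather than merely as ordinary natural bijections of underlying sets. Controlling this requires the full enriched Yoneda lemma and care that each step is an instance of a universal property in $\mathcal{C}$, so that the weights $Y(fx,y)$ and $Y(y,fx)$ interact correctly with tensor and cotensor at every stage. A secondary, more routine point is the usual smallness proviso: one assumes $X$ and $Y$ are small $\mathcal{C}$-categories, so that the functor $\mathcal{C}$-categories exist and the relevant (co)ends are indexed by small diagrams. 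Under this proviso the construction goes through and identifies $\mathrm{Lan}_f$ and $\mathrm{Ran}_f$ with the left and right Kan extensions along $f$, as the sought adjoints.
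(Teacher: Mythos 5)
Your proposal is correct and follows essentially the same route as the paper's proof: define $\mathrm{Lan}_f$ and $\mathrm{Ran}_f$ by the pointwise coend and end formulas weighted by $Y(fx,y)$ and $Y(y,fx)$, invoke bicompleteness of $\mathcal{C}$ for their existence, and establish the adjunction isomorphisms. You supply more detail than the paper does on the key step (the paper merely asserts that the canonical bijections are natural, whereas you spell out the Yoneda/Fubini/tensor--cotensor computation and flag the need for $\mathcal{C}$-naturality and smallness), but the underlying argument is the same.
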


\begin{proof}
Since $\mathcal{C}$ is bicomplete, the functor categories $[X,\mathcal{C}]$ and
$[Y,\mathcal{C}]$ are also bicomplete. For any $G\colon X\to\mathcal{C}$ and
$H\colon Y\to\mathcal{C}$, define
\[
(\mathrm{Lan}_f G)(y) \;\cong\; 
\int^{x\in X} Y(fx,y)\otimes G(x), 
\qquad
(\mathrm{Ran}_f G)(y) \;\cong\;
\int_{x\in X} [\,Y(y,fx),\,G(x)\,].
\]
Existence of the end and coend follows from the completeness and cocompleteness
of $\mathcal{C}$. The canonical bijections
\[
[Y,\mathcal{C}](\mathrm{Lan}_f G,H)
\;\cong\;
[X,\mathcal{C}](G,H\circ f)
\;\cong\;
[Y,\mathcal{C}](H,\mathrm{Ran}_f G)
\]
are natural in $G$ and $H$, giving the desired adjunctions. 
\end{proof}

Applied to enriched metric spaces over $\mathcal{V}=([0,\infty],\ge,+,0)$, this
implies the classical McShane-Whitney extension property:

\begin{corollary}
If $f\colon X\hookrightarrow Y$ is an isometric embedding of metric spaces, then
every Lipschitz map $g\colon X\to\mathbb{R}$ extends to $Y$ with the same
Lipschitz constant. Moreover, both maximal and minimal such extensions exist.
\end{corollary}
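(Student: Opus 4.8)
The statement is the McShane--Whitney extension theorem, and the plan is to exhibit it as the metric-space shadow of the Kan extension theorem just established. After rescaling the metric of $Y$ by the Lipschitz constant $L$ of $g$ (equivalently replacing $g$ by $g/L$), I may assume $g$ is non-expansive, i.e.\ a $\mathcal V$-functor for $\mathcal V=([0,\infty],\ge,+,0)$ once its values are shifted into $[0,\infty]$; the isometric embedding $f\colon X\hookrightarrow Y$ is then a fully faithful $\mathcal V$-functor. The two extensions I aim to produce are the explicit formulas
\[
\bar g(y):=\inf_{x\in X}\big(g(x)+L\,d(x,y)\big),\qquad
\underline g(y):=\sup_{x\in X}\big(g(x)-L\,d(x,y)\big),
\]
which are precisely the left and right Kan extensions $\mathrm{Lan}_f g$ and $\mathrm{Ran}_f g$ of the preceding theorem: in the poset $([0,\infty],\ge)$ the coend $\int^{x}$ is an infimum and the end $\int_{x}$ a supremum, and with $\otimes=+$ and $Y(fx,y)=d(x,y)$ (by isometry) the coend/end formulas collapse to the displayed expressions.

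The verification then proceeds in three steps. First I would check that $\bar g$ and $\underline g$ restrict to $g$ on $X$: for $y\in X$ the choice $x=y$ gives $\bar g(y)\le g(y)$, while the $L$-Lipschitz bound $g(x)\ge g(y)-L\,d(x,y)$ yields the reverse inequality, and symmetrically for $\underline g$. Second, a single triangle-inequality estimate shows both are $L$-Lipschitz on $Y$: for $y,y'\in Y$ and any $x$ nearly achieving the infimum defining $\bar g(y')$, one has $\bar g(y)\le g(x)+L\,d(x,y)\le g(x)+L\,d(x,y')+L\,d(y',y)$, so $\bar g(y)-\bar g(y')\le L\,d(y,y')$ after letting the slack tend to $0$; swapping $y,y'$ and repeating for $\underline g$ finishes this step. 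Third, extremality is exactly the universal property (adjointness) of the Kan extensions, which here reads directly: any $L$-Lipschitz extension $h$ satisfies $h(y)\le h(x)+L\,d(x,y)=g(x)+L\,d(x,y)$ for every $x\in X$, hence $h\le\bar g$, and dually $h\ge\underline g$. Thus $\bar g$ is the maximal and $\underline g$ the minimal $L$-Lipschitz extension, and both exist.

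The main obstacle is bookkeeping the mismatch between the $[0,\infty]$-enrichment and the $\mathbb R$-valued target: the internal hom of $\mathcal V$ is truncated subtraction and is asymmetric, so the literal end formula yields $\sup_{x}\max(g(x)-L\,d(x,y),0)$ rather than $\underline g$. I would resolve this by observing that the truncation is inactive once one passes to $\mathbb R$-valued modules (equivalently, after the sign-symmetrization replacing the one-sided $\mathcal V$-presheaf by a metric-space module valued in $(\mathbb R,|\cdot|)$), so that the Kan extension formulas reproduce $\bar g$ and $\underline g$ exactly. Once this identification is in place, everything else is the routine triangle-inequality bookkeeping of the three steps above, and the corollary follows as the stated instance of the preceding Kan extension theorem.
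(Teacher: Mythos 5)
Your proposal is correct and follows exactly the route the paper intends: the paper states this corollary as an immediate specialization of the preceding Kan extension theorem to Lawvere's base $([0,\infty],\ge,+,0)$ and offers no proof of its own, and you supply precisely that specialization, identifying $\mathrm{Lan}_f g$ and $\mathrm{Ran}_f g$ with the McShane--Whitney formulas $\inf_{x}(g(x)+L\,d(x,y))$ and $\sup_{x}(g(x)-L\,d(x,y))$. Your three-step direct verification (restriction, Lipschitz bound, extremality via the adjunction inequalities) is sound and makes the argument self-contained, and you are right to flag --- and correctly dispose of --- the truncated-subtraction mismatch between the $[0,\infty]$-valued internal hom and the $\mathbb{R}$-valued target, a subtlety the paper's enriched framing glosses over entirely.
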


We now recall the enriched characterization of completeness.

\begin{proposition}
A metric space $Y$ is Cauchy complete if and only if every $R$-dense isometric
embedding $i\colon X\to Y$ admits a left adjoint in the bimodule (profunctor)
sense.
\end{proposition}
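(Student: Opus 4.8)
The plan is to carry both sides of the equivalence into Lawvere's enrichment $\mathcal V=([0,\infty],\ge,+,0)$, where a metric space is a $\mathcal V$-category with $Y(y,y')=d(y,y')$, and to reason throughout with $\mathcal V$-profunctors (bimodules). Recall that a bimodule $\phi\colon A\nrightarrow B$ is a function $\phi(a,b)\in[0,\infty]$ with $A(a',a)+\phi(a,b)\ge\phi(a',b)$ and $\phi(a,b)+B(b,b')\ge\phi(a,b')$, that composition is the $\mathcal V$-coend $(\psi\circ\phi)(a,c)=\inf_b\bigl(\phi(a,b)+\psi(b,c)\bigr)$, that $2$-cells are pointwise $\ge$, and that every $\mathcal V$-functor $i$ yields $i_*(x,y)=Y(ix,y)$ and $i^*(y,x)=Y(y,ix)$ with $i_*\dashv i^*$. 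First I would record the bridge between adjoint bimodules and Cauchy data: an adjoint pair $\phi\dashv\psi$ with $\phi\colon\mathbf 1\nrightarrow Y$, $\psi\colon Y\nrightarrow\mathbf 1$ is exactly a Cauchy filter, realized by a sequence $(y_n)$ via $\phi(y)=\lim_n Y(y_n,y)$ and $\psi(y)=\lim_n Y(y,y_n)$. The counit inequality $\psi(y)+\phi(y')\ge Y(y,y')$ is automatic from the triangle inequality, while the unit inequality $\inf_y\bigl(\phi(y)+\psi(y)\bigr)=0$ translates \emph{verbatim} into the assertion that $(y_n)$ is Cauchy; and the pair is representable, $\phi=Y(y_0,-)$, precisely when $(y_n)$ converges to $y_0$. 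This dictionary (which rests on the $T_1$/Hausdorff metrizability of Theorem~\ref{thm:uniform-metrizable}, ensuring uniqueness of limits) is what I will feed both directions.

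Next I identify $R$-density with the adequacy identity proved above: an isometric embedding $i\colon X\to Y$ is $R$-dense iff $Y(y_1,y_2)=\sup_{x\in X}\bigl(Y(ix,y_2)-Y(ix,y_1)\bigr)$, equivalently the restricted Yoneda functor $Y\to[X^{\mathrm{op}},\mathcal V]$, $y\mapsto Y(i-,y)$, is fully faithful. One checks that ordinary metric density of $X$ in $Y$ already forces this identity (the right-hand side is $\le Y(y_1,y_2)$ by the triangle inequality, and letting $x\to y_1$ recovers equality), so $R$-dense embeddings are exactly the dense ones. In this language the clause ``$i$ admits a left adjoint in the bimodule sense'' means that the left-adjoint bimodule attached to $i$ —equivalently each Cauchy filter supported on $iX$— is \emph{representable by a point of $Y$}. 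For the forward direction I would then argue: assuming $Y$ Cauchy complete, $R$-density presents every $y\in Y$, and more generally every Cauchy filter carried by $X$, through its comparisons with $X$; completeness supplies the limit points by the bridge, and assembling these choices yields the required left adjoint, the adjunction (unit/counit) inequalities being immediate from the bimodule identities together with uniqueness of limits.

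For the converse I would run the bridge backwards. Given any Cauchy filter on $Y$, encode it as an adjoint pair $\phi\dashv\psi$ from $\mathbf 1$, pick a representing Cauchy sequence, and let $X$ be a countable dense subspace of $Y$ carrying it; density makes $i\colon X\hookrightarrow Y$ an $R$-dense isometric embedding, and by hypothesis it admits a left adjoint in the bimodule sense. Representability of that left adjoint produces $y_0\in Y$ with $\phi=Y(y_0,-)$, whence the filter converges to $y_0$; as the filter was arbitrary, $Y$ is Cauchy complete. \textbf{The main obstacle} is precisely this realization step: one must fabricate, for an \emph{abstract} adjoint bimodule, an honest $R$-dense embedding whose induced left adjoint reproduces that bimodule, verify the adequacy identity for the chosen $X$, and —crucially— disentangle the two senses of ``left adjoint,'' namely the automatic profunctor adjunction $i_*\dashv i^*$ valid for any embedding versus the genuine $Y$-representability demanded by the hypothesis. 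Matching the unit/counit inequalities of the profunctor adjunction with the usual $(\varepsilon,N)$ Cauchy and convergence conditions is where the real content lies; everything else is formal Yoneda bookkeeping inherited from the embedding results of this section.
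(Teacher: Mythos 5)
Your forward direction is essentially the paper's: use Cauchy completeness to produce, for each $y\in Y$, a limiting point for the weight $i^\ast(y,-)$ and assemble these into the left-adjoint bimodule; your dictionary between adjoint pairs $\mathbf 1\nrightarrow Y$ and Cauchy sequences is the standard bridge and matches the paper's use of colimiting points. The converse, however, contains a genuine gap --- exactly the one you flag at the end without closing. For \emph{any} dense isometric embedding $i\colon X\hookrightarrow Y$, the bimodule $i^\ast(y,x)=d_Y(y,ix)$ is already a left adjoint of $i_\ast$ in the profunctor sense: with composition $(\psi\circ\phi)(a,c)=\inf_b\bigl(\phi(a,b)+\psi(b,c)\bigr)$ one computes $(i^\ast\circ i_\ast)(x,x')=\inf_{y}\bigl(d_Y(ix,y)+d_Y(y,ix')\bigr)=d_X(x,x')$ by the triangle inequality, and $(i_\ast\circ i^\ast)(y,y')=\inf_{x}\bigl(d_Y(y,ix)+d_Y(ix,y')\bigr)=d_Y(y,y')$ by density (let $ix\to y$). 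So, read literally, the hypothesis ``every $R$-dense $i\colon X\to Y$ admits a left adjoint bimodule'' holds for \emph{every} metric space $Y$, complete or not, and your plan --- embed an arbitrary Cauchy sequence in a dense $X$, invoke the hypothesis, extract a limit --- cannot produce $y_0$. The step ``representability of that left adjoint produces $y_0\in Y$ with $\phi=Y(y_0,-)$'' is the entire content of the theorem and is asserted rather than deduced; nothing in the adjunction inequalities alone forces the adjoint to be representable by a point.

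The paper's converse avoids this trap by a different mechanism (at the price of not quite matching its own quantifier): it applies the hypothesis to the Yoneda embedding $j\colon Y\to\widehat Y$ into the Cauchy completion --- i.e.\ with $Y$ in the \emph{domain} position rather than the codomain --- and reads the resulting adjunction $P\dashv j_\ast$ as exhibiting $Y$ as a retract of $\widehat Y$, whence $Y$ inherits Cauchy completeness from $\widehat Y$. If you wish to keep your dense-subspace route, you must strengthen the hypothesis to require that the left adjoint be \emph{representable}, i.e.\ induced by an actual nonexpansive map $Y\to X$, and then verify that this forces every adjoint weight $\phi\colon\mathbf 1\nrightarrow X$ to push forward along $i_\ast$ to a representable weight on $Y$; otherwise you should switch to the completion-and-retract argument. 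As written, your converse does not go through.
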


\begin{proof}
We work over Lawvere’s base $\mathcal{V}=([0,\infty],\ge,+,0)$. For a 
$\mathcal{V}$-functor $i\colon X\to Y$, denote by
\[
i_\ast\colon X\rightsquigarrow Y, \qquad 
i^\ast\colon Y\rightsquigarrow X
\]
the representable bimodules defined by
\[
i_\ast(x,y)=d_Y(i(x),y), 
\qquad 
i^\ast(y,x)=d_Y(y,i(x)).
\]
$R$-density means that the family $\{\,i^\ast(\cdot,x)\,\}_{x\in X}$ is
adequate, i.e.\ it detects distances in $Y$.

($\Rightarrow$)  
If $Y$ is Cauchy complete, then for each $y\in Y$ the weight
$i^\ast(y,-)\colon X\to\mathcal{V}$ has a colimiting point $L(y)\in X$ such that
\[
d_Y(y,i(x)) = d_X(L(y),x)
\quad\text{for all } x\in X.
\]
Define a bimodule $L\colon Y\rightsquigarrow X$ by $L(y,x)=d_X(L(y),x)$. 
Then the enriched adjunction inequalities
\[
1_Y \le i_\ast\circ L, 
\qquad 
L\circ i_\ast \le 1_X
\]
hold, giving $L\dashv i_\ast$.

($\Leftarrow$)  
Conversely, let $j\colon Y\to\widehat{Y}$ denote the Yoneda isometric embedding
into the Cauchy completion of $Y$, which is $R$-dense. By hypothesis, there
exists $P\colon \widehat{Y}\rightsquigarrow Y$ with $P\dashv j_\ast$. Hence
\[
1_Y \le j_\ast\circ P, 
\qquad 
P\circ j_\ast \le 1_Y,
\]
so $Y$ is a retract of its Cauchy completion and therefore Cauchy complete.
\end{proof}

\subsection{Structural parallels with uniform spaces}

The modular uniformity $\mathcal{V}(w)$ forms the natural bridge between
categorical enrichment and classical topology. In Isbell’s settting
\cite{Isbell1964}, every uniform space admits a completion and categorical
product, and these constructions lift directly to modular spaces. Functorial
operations such as products, subspaces, and quotients preserve modular
uniformities under mild convexity or $\Delta_2$ hypotheses, ensuring that the
category of modular uniform spaces behaves analogously to that of complete
uniform spaces in the classical sense.

%-----------------------------------Section 7 -----------------------------------------------------------

\section{Conclusion and Future Work}

This paper extends the study initiated in \cite{MushaandjaOlela2025}, 
developing the categorical, functional, and uniform perspectives of modular 
(pseudo)metric spaces and clarifying their relationship to fuzzy and classical 
metric structures. The results demonstrate that modular metrics preserve key 
analytic invariants such as convexity and $\Delta_2$-conditions while embedding 
naturally within categorical and uniform settings of topology.  

Several avenues for further research emerge naturally.  
First, the analysis of \emph{quasi-modular structures}, obtained by relaxing 
symmetry, may parallel the transition from metrics to quasi-metrics and lead 
to a systematic theory of asymmetric modular uniformities.  
Second, the compactness and completeness theory for modular spaces invites 
further refinement beyond the $\Delta_2$ setting, potentially yielding new 
criteria for modular precompactness and convergence.  
Finally, the embedding of modular topologies into Banach function space 
theory suggests deep interactions with Orlicz, Musielak–Orlicz, and 
variable-exponent settings, where modular uniformities may offer 
alternative approaches to reflexivity, separability, and compact embedding 
results.

% ------------------------------------- REFERENCES --------------------------------

\end{document}